\newtheorem{theorem}{Theorem}[section]
\newtheorem{proposition}[theorem]{Proposition}
\newtheorem{corollary}[theorem]{Corollary}
\newtheorem{example}[theorem]{Example}
\newtheorem{question}[theorem]{Question}
\newtheorem{observation}[theorem]{Observation}
\newtheorem{problem}[theorem]{Problem}
\newtheorem{claim}{Claim}
\newtheorem{fact}[theorem]{Fact}
\newcommand{\thistheoremname}{}
\newtheorem*{genericthm*}{\thistheoremname}
\newenvironment{namedthm*}[1]
  {\renewcommand{\thistheoremname}{#1}%
   \begin{genericthm*}}
  {\end{genericthm*}}
\def\beq{\begin{equation}}\def\eeq{\end{equation}}
\def\beqn{\begin{eqnarray}}\def\eeqn{\end{eqnarray}}
\newcommand{\floor}[1]{\left\lfloor#1\right\rfloor}
\newcommand{\ceiling}[1]{\left\lceil#1\right\rceil}
\newcommand{\RC}{R^{1C}}
\newcommand{\RM}{R^{M}}
\newcommand{\PM}{R^{PM}}
\newcommand{\lf}{path-matching}
\newcommand{\lfs}{path-matchings}
\begin{document}
\title{Ramsey numbers of path-matchings, covering designs, and 1-cores}
\author{Louis DeBiasio\thanks{Department of Mathematics, Miami University, Oxford, Ohio. \texttt{debiasld@miamioh.edu}} \thanks{Research supported in part by Simons Foundation Collaboration Grant \#283194}\and Andr\'as Gy\'arf\'as\thanks{Alfr\'ed R\'enyi Institute of Mathematics, Hungarian Academy of Sciences, Budapest, P.O. Box 127, Budapest, Hungary, H-1364. \texttt{gyarfas.andras@renyi.mta.hu}, \texttt{sarkozy.gabor@renyi.mta.hu}} \thanks{Research supported in part by
NKFIH Grant No. K116769.} \and G\'{a}bor N. S\'ark\"ozy\footnotemark[3]
\thanks{Computer Science Department, Worcester Polytechnic Institute, Worcester, MA.} \thanks{Research supported in part by
NKFIH Grants No. K116769, K117879.}
}

\maketitle

\begin{abstract}
A path-matching of order $p$ is a vertex disjoint union of nontrivial paths spanning $p$ vertices. Burr and Roberts, and Faudree and Schelp determined the 2-color Ramsey number of path-matchings. In this paper we study the multicolor Ramsey number of path-matchings. Given positive integers $r, p_1, \dots, p_r$, define $R^{PM}(p_1, \dots, p_r)$ to be the smallest integer $n$ such that in any $r$-coloring of the edges of $K_n$ there exists a path-matching of color $i$ and order at least $p_i$ for some $i\in [r]$. Our main result is that for $r\geq 2$ and $p_1\geq \dots\geq p_r\geq 2$, if $p_1\geq 2r-2$, then \[R^{PM}(p_1, \dots, p_r)= p_1- (r-1) + \sum_{i=2}^{r}\left\lceil\frac{p_i}{3}\right\rceil.\] Perhaps surprisingly, we show that when $p_1<2r-2$, it is possible that $R^{PM}(p_1, \dots, p_r)$ is larger than $p_1- (r-1) + \sum_{i=2}^{r}\left\lceil\frac{p_i}{3}\right\rceil$, but in any case we determine the correct value to within a constant (depending on $r$); i.e. \[p_1- (r-1) + \sum_{i=2}^{r}\left\lceil\frac{p_i}{3}\right\rceil \leq R^{PM}(p_1, \dots, p_r)\leq \left\lceil p_1-\frac{r}{3}+\sum_{i=2}^r\frac{p_i}{3}\right\rceil.\] As a corollary we get that in every $r$-coloring of $K_n$ there is a monochromatic path-matching of order at least $3\left\lfloor\frac{n}{r+2}\right\rfloor$, which is essentially best possible. We also determine $R^{PM}(p_1, \dots, p_r)$ in all cases when the number of colors is at most 4.

The proof of the main result uses a minimax theorem for path-matchings derived from a result of Las Vergnas (extending Tutte's 1-factor theorem) to show that the value of $R^{PM}(p_1, \dots, p_r)$ depends on the block sizes in covering designs (which can be also formulated in terms of monochromatic $1$-cores in colored complete graphs). While block sizes in covering designs have been studied intensively before, they seem to have only been studied in the uniform case (when all block sizes are equal). Then we obtain the result above by giving estimates on the block sizes in covering designs in the arbitrary (non-uniform) case.
\end{abstract}

\section{Introduction}

One of the seminal results in graph-Ramsey theory is the following theorem of Cockayne and Lorimer \cite{CL} which gives the $r$-color Ramsey number of a matching.  Given positive integers $r, p_1, \dots, p_r$ let $\RM(p_1, \dots, p_r)$ be the smallest integer $n$ such that in every $r$-coloring of the edges of $K_n$, there exists a matching of color $i$ and order at least $p_i$ for some $i\in [r]$.

\begin{theorem}[Cockayne, Lorimer \cite{CL}]\label{colo} Let $r\geq 2$ and let $p_1\ge p_2\ge \dots \ge p_r\geq 2$. Then $$\RM(p_1, \dots, p_r)= p_1-(r-1) + \sum_{i=2}^{r} \ceiling{\frac{p_i}{2}}.$$
\end{theorem}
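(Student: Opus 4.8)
The plan is to prove the two bounds $\RM(p_1,\dots,p_r)\ge f$ and $\RM(p_1,\dots,p_r)\le f$ separately, writing $f:=p_1-(r-1)+\sum_{i=2}^r\ceiling{p_i/2}$ for the claimed value. It is convenient to set $m_i:=\ceiling{p_i/2}-1$, so that a color $i$ fails to contain a matching of order $\ge p_i$ precisely when its matching number $\nu_i$ is at most $m_i$, and to record the identity $f=p_1+\sum_{i=2}^r m_i$.

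For the lower bound I would $r$-color $K_{f-1}$ explicitly. Partition the vertices into a set $A$ with $|A|=p_1-1$ and sets $B_2,\dots,B_r$ with $|B_i|=m_i$; these sizes sum to $f-1$. Color an edge by color $i$ (for $2\le i\le r$) if $i$ is the largest index for which the edge meets $B_i$, and by color $1$ if it lies inside $A$. Every color-$1$ edge then lies in $A$, so a color-$1$ matching covers at most $|A|=p_1-1<p_1$ vertices; and every color-$i$ edge with $i\ge2$ meets $B_i$, so a color-$i$ matching uses at most $|B_i|=m_i$ edges, i.e.\ has order at most $2m_i\le p_i-1$. Thus no color contains a matching of order $\ge p_i$, giving $\RM(p_1,\dots,p_r)\ge f$.

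For the upper bound I would induct on $r$, showing that every $r$-coloring of $K_f$ contains a color-$i$ matching of order $\ge p_i$ for some $i$; equivalently, if every color satisfies $\nu_i\le m_i$ then $n\le f-1$. The engine is the Gallai--Edmonds structure of one color class. Applied to the smallest color $r$, let $D$ be the set of vertices missed by some maximum color-$r$ matching, let $A'=N(D)\setminus D$, and let $C$ be the rest. The components $D_1,\dots,D_t$ of the color-$r$ graph on $D$ are factor-critical, hence each has at most $2m_r+1$ vertices (a factor-critical graph on $\delta$ vertices forces $\nu_r\ge(\delta-1)/2$); moreover $C$ carries a perfect color-$r$ matching and $A'$ is matched into distinct $D_j$, so $|C|\le 2m_r$ and $|A'|\le m_r$. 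Crucially, between distinct $D_j$ there are no color-$r$ edges, so all of those edges use colors $1,\dots,r-1$.

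The main obstacle is a factor-$2$ gap: the naive reduction, taking one vertex per component to form an $(r-1)$-colored transversal and invoking the inductive bound, keeps only $t$ vertices, whereas the components together span roughly $n-3m_r$ vertices. This is exactly the asymmetry in $f$ between the cost $2m_1$ of color $1$ and the cost $m_i$ of the colors $i\ge2$. I would close it by retaining the interiors of the components: since every $D_j$ is small and between-component edges avoid color $r$, the graph in colors $1,\dots,r-1$ between components is complete multipartite with small parts, and hence has a matching covering all but at most $\max_j|D_j|\le 2m_r+1$ of its vertices, which recovers the missing factor. I expect the delicate points to be (i) that the inductive hypothesis supplies a result for a \emph{complete} host, whereas the between-component structure is only complete multipartite, so one must either strengthen the hypothesis or argue directly on this host, and (ii) the parity bookkeeping hidden in the $\floor{\cdot}$/$\ceiling{\cdot}$ and the $\pm1$ boundary cases. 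The base case $r=2$, where the colors $1,\dots,r-1$ reduce to a single color and the between-component graph is a near-complete color-$1$ graph whose large matching can be read off directly, already displays the full mechanism.
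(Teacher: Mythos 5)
First, a point of reference: the paper does not prove Theorem \ref{colo} at all --- it is quoted from Cockayne and Lorimer \cite{CL}, and the only material the paper supplies is the extremal coloring $[p_1-1,\ceiling{\frac{p_2}{2}}-1,\dots,\ceiling{\frac{p_r}{2}}-1]$. Your lower bound is exactly that coloring, and that half of your argument is complete and correct.

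Your upper bound, however, has a genuine gap, and it is not merely the ``delicate point'' you flag; the accounting itself fails. Two problems. First, the near-perfect matching you exhibit between the components $D_1,\dots,D_t$ lives in the \emph{union} of colors $1,\dots,r-1$, so it has no monochromatic content; ``recovers the missing factor'' does not follow from it. What you would actually need is to apply an inductive hypothesis to the $(r-1)$-colored complete multipartite graph whose parts are the components of $G_r-A'$, and as you note in (i), Cockayne--Lorimer for complete multipartite hosts is a strictly stronger statement that requires its own formulation (some largest-part hypothesis is unavoidable --- a star shows the statement is false without one) and its own proof; none of that is carried out. Second, and independently, your reduction discards $A'$ and $C$, and there is a concrete coloring on which this makes the argument terminate without a contradiction: take $n=f$ and let color $r$ consist of a perfect matching on $2m_r$ vertices and nothing else. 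Then $A'=\emptyset$, $C$ is those $2m_r$ vertices, every $D_j$ is a singleton, and your between-component graph is a \emph{complete} $(r-1)$-colored graph on $|D|=f-2m_r$ vertices --- which is $m_r$ vertices \emph{below} the $(r-1)$-color Ramsey number $\RM(p_1,\dots,p_{r-1})=f-m_r$, so no induction (multipartite or not) can fire. The theorem is of course still true for this coloring, but the guaranteed monochromatic matching must use vertices of $C$, which you threw away: the color-$r$ decomposition costs you up to $2m_r$ vertices while the formula budgets only $m_r$ for color $r$. Any correct version must keep $C$, with the color-$r$ components inside it as additional parts, which forces you back into the unproved multipartite setting.

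This difficulty is precisely what the short modern proofs avoid by not inducting on $r$. Petrov's proof \cite{P} (see also \cite{XYZ}), which the paper generalizes to \lfs{} in Theorem \ref{mainthm}, applies the deficiency formula to \emph{all} colors simultaneously: one takes a Berge--Tutte/Gallai--Edmonds set $X_i$ for each color $i$, deletes $X=\bigcup_i X_i$, and observes that on $Y=V\setminus X$ the components of the different colors cover every pair of $Y$, reducing the problem to a counting argument of covering-design type on a complete host --- no induction, and no non-complete host ever appears. If you want to keep your outline, the multipartite strengthening can be formulated and proved, but that is where the real work of the theorem lies, not in the Gallai--Edmonds setup you have written down.
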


Theorem \ref{colo} is sharp, shown by the coloring
$[p_1-1, \ceiling{\frac{p_2}{2}}-1,\dots, \ceiling{\frac{p_r}{2}}-1]$ which is defined as follows: Given integers $r\geq 2$ and $t_1, \dots, t_r\geq 0$, let $n=t_1+\dots+t_r$ and define $[t_1,t_2,\dots, t_r]$ to be the $r$-coloring of $K_n$ obtained by partitioning $V(K_n)$ as $\{A_1, \dots, A_r\}$ such that
$|A_i|=t_i$ for all $i\in [r]$, and coloring every edge $\{x,y\}$ with the maximum $j$ for which $\{x,y\}$ has a non-empty intersection with $A_j$.

We denote by $P_k$ the path with $k$ vertices and define a \emph{\lf{}} as a vertex disjoint union of paths, each with at least 2 vertices. The \emph{order} of a \lf{} $P$ is $|V(P)|$; i.e.\ the number of vertices spanned by $P$.  A \lf{} can clearly be written as vertex disjoint union of $P_2$-s and $P_3$-s. Thus the maximum order of a \lf{} in a graph is equal to the maximum order of a \lf{} containing only $P_2$ and $P_3$ components.  We note that sometimes a \lf{} is called a {\em linear forest} in the literature \cite{BR, FS}.

Here we study the Ramsey problem for \lfs{}: what is the order of the largest monochromatic \lf{} we can find in every $r$-coloring of the edges of $K_n$?  Note that this belongs to the part of Ramsey theory where the target graph is a large monochromatic {\em member of a family} instead of a specified graph. Many other families have been investigated, for example the family of connected graphs, graphs without isolated vertices, highly connected graphs, graphs of small diameter, etc. A survey on problems of this flavor is \cite{GYSUR}.

Given positive integers $r, p_1, \dots, p_r$, define $\PM(p_1, \dots, p_r)$ to be the smallest integer $n$ such that in any $r$-coloring of the edges of $K_n$ there exists a \lf{} of color $i$ and order at least $p_i$ for some $i\in [r]$.   If $p_1=\dots=p_r=p$, we sometimes write $\PM_r(p)$ instead of  $\PM(p, \dots, p)$.

Burr and Roberts \cite{BR} proved that for all integers $p\geq 2$, $\PM(p,p)=\ceiling{\frac{4p}{3}}-1$.  Later, Faudree and Schelp \cite{FS} proved a non-symmetric version; that is, for all integers $p_1\geq p_2\geq 2$, $$\PM(p_1,p_2)=p_1+\ceiling{\frac{p_2}{3}}-1.$$
%When $p_2\geq 4$, this is best possible by considering the coloring $\left[p_1-1, \ceiling{\frac{p_2}{3}}-1\right]$.
(In fact, in both cases above the authors prove a stronger statement where the formula takes into account the number of paths of odd length.)  We extend these results to $r$-colorings with $r\geq 3$.

Our main result is that we completely determine $\PM$ provided $p_1$ is not too small compared to $r$.  Note the similarity between Theorem \ref{colo} and Theorem \ref{main2}.

\begin{theorem}\label{main2} Let $r\geq 3$ and let $p_1\ge p_2\ge \dots \ge p_r\geq 2$ be integers with $p_1\geq 4$.  If
$p_1\geq 2r-3-\sum_{i=2}^r 3\left(\ceiling{\frac{p_i}{3}}-\frac{p_i}{3} \right),$ then $$\PM(p_1, \dots, p_r)=  p_1- (r-1) +  \sum_{i=2}^{r}\ceiling{\frac{p_i}{3}}.$$
\end{theorem}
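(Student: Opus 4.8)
The plan is to prove matching lower and upper bounds. For the lower bound I would exhibit, in the spirit of the extremal coloring for Theorem~\ref{colo}, the $r$-coloring $[p_1-1,\ceiling{p_2/3}-1,\dots,\ceiling{p_r/3}-1]$ of $K_{N-1}$, where $N=p_1-(r-1)+\sum_{i=2}^r\ceiling{p_i/3}$ (one checks $t_1+\dots+t_r=N-1$). Here color $1$ spans only the clique $A_1$, whose largest \lf{} has order $p_1-1<p_1$. For $j\ge 2$, color $j$ is a split graph: a clique on $A_j$ completely joined to the independent set $A_1\cup\dots\cup A_{j-1}$. The key elementary lemma is that in such a split graph a \lf{} covers at most $3|A_j|$ vertices, since every nontrivial path component uses at least one vertex of $A_j$ and, the other side being independent, contains at most $(|A_j\cap P|+1)$ vertices off $A_j$, so each vertex of $A_j$ accounts for at most three covered vertices (achieved by a $P_3$ centred in $A_j$). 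Hence the largest color-$j$ \lf{} has order at most $3(\ceiling{p_j/3}-1)<p_j$, provided $A_1\cup\dots\cup A_{j-1}$ is large enough, which holds because $p_1$ is large. This gives $\PM(p_1,\dots,p_r)\ge N$.

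For the upper bound I would first record a Tutte--Berge/Las Vergnas type minimax for \lfs{}: the maximum order of a \lf{} in a graph $G$ on $n$ vertices equals $n-\max_{S\subseteq V(G)}\bigl(i(G-S)-2|S|\bigr)$, where $i(G-S)$ is the number of isolated vertices of $G-S$ (the factor $2$ arising because in a \lf{} each vertex of $S$ is adjacent to at most two such vertices). Suppose toward a contradiction that some $r$-coloring of $K_n$ has no color-$i$ \lf{} of order $\ge p_i$ for every $i$. Applying the minimax to each color graph $G_i$ yields a set $S_i$ with $i(G_i-S_i)-2|S_i|\ge n-p_i+1$; write $s_i=|S_i|$ and let $I_i$ be the set of isolated vertices of $G_i-S_i$, so $|I_i|\ge n-p_i+1+2s_i$.

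The crucial structural observation concerns $W:=V\setminus\bigcup_i S_i$. If $v\in I_i$ then every color-$i$ edge at $v$ lands in $S_i$; hence for $u,v\in W$ the color of $uv$ lies in neither $C(u)$ nor $C(v)$, where $C(w):=\{i:w\in I_i\}$. Equivalently, setting $\bar X_i:=\{w\in W:i\notin C(w)\}$, every color-$i$ edge inside $W$ has both endpoints in $\bar X_i$, and $|\bar X_i|\le p_i-1-3s_i$ (the extra $s_i$ coming from passing from $I_i$ to $I_i\cap W$). Thus the blocks $\bar X_1,\dots,\bar X_r$ cover every pair of the set $W$; that is, they form a covering design on $m:=|W|$ points with block sizes bounded by $b_i:=p_i-1-3s_i$. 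Since $n=m+|\bigcup_i S_i|\le m+\sum_i s_i$, the whole theorem reduces to the claim that such a covering forces $m\le (p_1-1-s_1)+\sum_{i=2}^r(\ceiling{p_i/3}-1-s_i)$, because substituting this bound into $n\le m+\sum_i s_i$ yields exactly $n\le N-1$.

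The main obstacle is therefore the extremal estimate for these (non-uniform) covering designs. Writing $\delta_i=3\ceiling{p_i/3}-p_i\in\{0,1,2\}$, one has $b_i=3(\ceiling{p_i/3}-1-s_i)+2-\delta_i$, so the blocks $i\ge2$ are essentially of bounded size, and the hypothesis $p_1\ge 2r-3-\sum_{i\ge2}\delta_i$ says precisely that block $1$ dominates the sum of the remaining tiny blocks. In this regime I expect the covering to be most efficient when essentially all pairs are absorbed by the single largest block, yielding the desired bound on $m$; quantifying this efficiency, and understanding why the bound can fail (permitting a strictly larger Ramsey number through a projective-plane-like combination of small blocks) once $p_1<2r-2$, is the delicate non-uniform covering-design calculation that I would isolate as a separate lemma. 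Combining that lemma with the reduction above closes the upper bound and matches the construction.
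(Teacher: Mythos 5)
Your lower-bound construction and your reduction are both correct, and they are essentially the paper's own first steps: the split-graph computation is exactly the extremal coloring $[p_1-1,\ceiling{\frac{p_2}{3}}-1,\dots,\ceiling{\frac{p_r}{3}}-1]$ (note the proviso about $A_1\cup\dots\cup A_{j-1}$ being large is unnecessary --- the bound $3|A_j|$ holds regardless), and your passage from the Las Vergnas sets $S_i$ to blocks $\bar X_i=W\setminus I_i$ with $|\bar X_i|\le p_i-1-3s_i$ is precisely the upper-bound direction of Theorem \ref{mainthm}, with your $S_i, I_i, W$ playing the roles of the paper's $X_i, S_i, Y$. The genuine gap is the step you defer: the non-uniform covering-design estimate is not a routine calculation to be ``isolated as a separate lemma'' later --- it is the technical heart of the entire theorem, and you offer only the heuristic that the covering should be ``most efficient when essentially all pairs are absorbed by the single largest block.'' That heuristic is not what drives the proof, and it is not even an accurate picture of extremal coverings in this regime: for instance $\RC(5,5,5)=7$ because $K_6$ is covered by three blocks of size $4$ with no dominant block (split $K_6$ into pairs $A_1,A_2,A_3$ and take the blocks $A_i\cup A_{i+1}$).

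What your reduction actually requires is the paper's Theorem \ref{1-core}: for $q_1\ge\dots\ge q_r\ge 2$, $\RC(q_1,\dots,q_r)\le\max\left\{q_1,\ \ceiling{\frac{q_1+q_2+q_3}{2}}-1,\ \ceiling{\frac{q_1}{3}-\frac{r}{3}+\sum_{i=1}^r\frac{q_i}{3}}\right\}$. Its proof needs a real idea --- partition the vertices by whether they see exactly two or at least three colors, and show via a rainbow-triangle argument that the locally $2$-colored part either has a color seen by all of its vertices or forces $2n\le q_1+q_2+q_3-3$ --- none of which appears in your proposal. Moreover, even granting that theorem, your claimed inequality $m\le(p_1-1-s_1)+\sum_{i\ge2}(\ceiling{\frac{p_i}{3}}-1-s_i)$ does not follow immediately: the numbers $p_i-3s_i$ need not be sorted in the same order as the $p_i$, so one must re-sort them (the paper's permutation $\pi$) and check, term by term in the maximum, that adding back $\sum_i s_i$ stays below $p_1-(r-1)+\sum_{i\ge2}\ceiling{\frac{p_i}{3}}$; this is the three-case analysis in the paper's proof together with Fact \ref{techfact}, and it is exactly there --- not in the reduction --- that the hypotheses $p_1\geq 2r-3-\sum_{i=2}^r 3\left(\ceiling{\frac{p_i}{3}}-\frac{p_i}{3}\right)$ (Fact \ref{techfact}(ii)) and $p_1\ge 4$ (Fact \ref{techfact}(iii)) enter. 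As written, your argument establishes only the content of Theorem \ref{mainthm}, not Theorem \ref{main2}.
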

The lower bound in Theorem \ref{main2} comes from the extremal coloring
\[
\left[p_1-1,\ceiling{\frac{p_2}{3}}-1,\dots, \ceiling{\frac{p_r}{3}}-1\right].
\]

Note that if at least $r-3$ of the terms $p_2, \dots, p_r$ were congruent to $1\bmod 3$, then $p_1\geq 2r-3-\sum_{i=2}^r 3\left(\ceiling{\frac{p_i}{3}}-\frac{p_i}{3} \right)$ reduces to $p_1\geq 3$ and thus we have an exact result with no extra conditions.

It would be natural to guess that the requirement that $p_1$ be sufficiently large in terms of $r$ is unnecessary. However, in Corollary \ref{corlower}, we will prove that if say $\frac{3}{2}\floor{\frac{\sqrt{8r+1}+1}{2}}>p_1\geq \dots \geq p_r\geq 3$ with all $p_i$ being divisible by 3, then, perhaps surprisingly,
\[
\PM(p_1, \dots, p_r)>p_1-(r-1)+\sum_{i=2}^r\ceiling{\frac{p_i}{3}}.
\]
Thus the complete determination of $\PM(p_1, \dots, p_r)$ is still open and as we will see later, determining $\PM(p_1, \dots, p_r)$ when $p_1$ is fixed and $r$ is large may be difficult because of the connection with covering designs.

However, our next main result shows that in any case we can determine $\PM(p_1, \dots, p_r)$ to within a constant (depending on $r$).

\begin{theorem}\label{main} Let $r\geq 2$ and let $p_1\ge p_2\ge \dots \ge p_r\geq 2$ be integers.
Then $$\PM(p_1, \dots, p_r)\leq \ceiling{p_1-\frac{r}{3}+\sum_{i=2}^r\frac{p_i}{3}}.$$
\end{theorem}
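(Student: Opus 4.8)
The plan is to argue the contrapositive: fix an $r$-coloring of $K_n$ with no \lf{} of color $i$ and order at least $p_i$ for any $i$, and bound $n$ from above. Writing $G_i$ for the graph of color-$i$ edges, the hypothesis says the maximum order of a \lf{} in $G_i$ is at most $p_i-1$. Feeding this into the minimax theorem for \lfs{} (the Tutte--Berge-type formula coming from Las Vergnas's theorem) produces, for each $i$, a set $S_i$ whose deletion leaves $G_i-S_i$ with at least $n-(p_i-1)+2|S_i|$ isolated vertices. Let $B_i$ be the set of non-isolated vertices of $G_i-S_i$. Then $(S_i,B_i)$ is a disjoint pair with two crucial properties: every color-$i$ edge either meets $S_i$ or lies inside $B_i$, and, rearranging the count of isolated vertices, $3|S_i|+|B_i|\le p_i-1$. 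Thus each color supplies a ``weighted cover'': a star-part $S_i$ (cost $3$ per vertex) together with a clique-part $B_i$ (cost $1$ per vertex) of total cost at most $p_i-1$.

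Since every edge of $K_n$ receives some color and is therefore covered by the corresponding pair, the collection $\{(S_i,B_i)\}_{i\in[r]}$ covers all pairs of $V(K_n)$. First I would restrict attention to $W:=V(K_n)\setminus\bigcup_{i}S_i$. A pair inside $W$ meets no $S_i$, so it must lie inside some $B_i$; hence, setting $Q(v):=\{i: v\in B_i\}$ for $v\in W$, the sets $\{Q(v):v\in W\}$ are pairwise intersecting and each is nonempty. Writing $\sigma_i=|S_i|$ and $\beta_i=|B_i|$, the budget gives $\sigma_i\le\frac13(p_i-1-\beta_i)$, so
\[
n=|W|+\Big|\bigcup_i S_i\Big|\le |W|+\sum_i\sigma_i\le |W|-\tfrac13\sum_i\beta_i+\tfrac13\sum_{i}(p_i-1).
\]

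The heart of the matter is to control $|W|-\tfrac13\sum_i\beta_i$ using the pairwise-intersecting structure. Let $d=\min_{v\in W}|Q(v)|$ and pick $v_0$ attaining it. On one hand every $v\in W$ lies in at least $d$ of the cliques, so $\sum_i\beta_i\ge\sum_i|B_i\cap W|\ge d|W|$. On the other hand every $Q(v)$ meets $Q(v_0)$, so $W\subseteq\bigcup_{i\in Q(v_0)}B_i$ and hence $|W|\le\sum_{i\in Q(v_0)}\beta_i\le d(p_1-1)$, using that $p_1$ is the largest parameter. Substituting, $n\le\frac{3-d}{3}|W|+\frac13\sum_i(p_i-1)$; when $d\in\{1,2\}$ the first term is at most $\frac{d(3-d)}{3}(p_1-1)\le\frac23(p_1-1)$, and when $d\ge3$ it is nonpositive. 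In every case $n\le\frac23(p_1-1)+\frac13\sum_{i=1}^r(p_i-1)=p_1-1-\frac{r-1}{3}+\frac13\sum_{i=2}^rp_i$, which is $\le\lceil p_1-\frac r3+\sum_{i=2}^r\frac{p_i}3\rceil-1$; this is exactly the desired bound after accounting for the ceiling.

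I expect the main obstacle to be the covering step of the last paragraph. It is tempting to instead peel off one color at a time and induct, or to count edges via the independent sets $V(K_n)\setminus(S_i\cup B_i)$, but both approaches inflate the coefficient of the ``second-largest'' parameter from $\tfrac13$ to $1$ and are genuinely too weak (the extremal coloring $[p_1-1,\lceil p_2/3\rceil-1,\dots]$ already shows the edge-count carries constant-factor slack). The essential point is that the weighted-cover reformulation turns the problem into a covering-design question---covering the pairs of $W$ by cliques of bounded size---and the pairwise-intersecting family together with the choice of a minimum block $Q(v_0)$ is what correctly balances the two regimes (few large cliques versus many small ones) and yields the sharp constant.
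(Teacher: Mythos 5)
Your proposal is correct in substance, but it takes a genuinely different route from the paper in its key counting step. The first half coincides with the paper's proof of Theorem \ref{mainthm}: you use the Las Vergnas deficiency formula (Corollary \ref{lvthdef}) to extract, for each color, a pair $(S_i,B_i)$ with $3|S_i|+|B_i|\le p_i-1$ such that every color-$i$ edge meets $S_i$ or lies inside $B_i$; restricting to $W=V\setminus\bigcup_i S_i$ is exactly the paper's passage to the set $Y$ and its small monochromatic $1$-cores. Where you diverge is in how the resulting covering design on $W$ is analyzed. The paper factors this through a standalone bound on $1$-core Ramsey numbers (Theorem \ref{1-core}), whose proof partitions vertices by the number of colors they see and hinges on the rainbow-triangle claim that the vertices seeing exactly two colors number at most $p_1-1$; the pieces are then assembled by the arithmetic of Fact \ref{techfact}. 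You instead set $d=\min_{v\in W}|Q(v)|$ and split on $d$: if $d\le2$, the pairwise-intersecting structure forces $W\subseteq\bigcup_{i\in Q(v_0)}B_i$, so $|W|\le d(p_1-1)$, while if $d\ge3$ the count $\sum_i\beta_i\ge d|W|$ already wins. This minimum-block-degree argument is a clean replacement for the rainbow-triangle claim and makes the proof of Theorem \ref{main} self-contained in a single pass. What the paper's modular decomposition buys is reusability and precision: the exact identity $\PM=f^3$ and the three-term maximum in Theorem \ref{1-core} are what yield the sharp Theorem \ref{main2} and the small-color results, whereas your uniform bound $n\le\frac23(p_1-1)+\frac13\sum_{i=1}^r(p_i-1)$ is too crude for those.

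Two small repairs are needed, neither affecting the approach. First, your closing inequality asserts, writing $A=p_1-\frac r3+\sum_{i=2}^r\frac{p_i}3$, that $A-\frac23\le\ceiling{A}-1$, i.e.\ $A+\frac13\le\ceiling{A}$; since $3A\in\mathbb{Z}$, this fails exactly when $A$ is an integer. The fix is to invoke the integrality of $n$ rather than a numeric inequality: from $n\le A-\frac23$ and $n\in\mathbb{Z}$ one gets $n\le\ceiling{A}-1$ in all three residue cases of $3A \bmod 3$, which is what the contrapositive requires. Second, the nonemptiness and pairwise intersection of the sets $Q(v)$, and hence $d\ge1$, require $|W|\ge2$; when $|W|\le1$ (so possibly $Q(v_0)=\emptyset$ and $d=0$) your chain would demand $|W|\le d(p_1-1)=0$. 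Handle this degenerate case directly: $n\le1+\frac13\sum_{i=1}^r(p_i-1)\le A-\frac13$ using $p_1\ge2$, which again suffices by integrality.
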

%We have equality above when $p_i\equiv 1\bmod 3$ for all $2\leq i\leq r$, as shown by the extremal coloring
%$$\left[p_1-1,\ceiling{\frac{p_2}{3}}-1,\dots, \ceiling{\frac{p_r}{3}}-1\right]=\left[p_1-1,\frac{p_2-1}{3},\dots, \frac{p_r-1}{3}\right].$$

We get the following corollary of Theorem \ref{main} in the case where all the $p_i$-s are equal (stated here using the inverse formulation).

\begin{corollary}\label{diagonal}
Let $r\geq 2$ be an integer. Every $r$-coloring of $K_n$
contains a monochromatic \lf{} of order at least $3\floor{\frac{n}{r+2}}$.
\end{corollary}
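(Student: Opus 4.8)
The plan is to invoke Theorem \ref{main} in the symmetric case and then unwind the resulting inequality. Set $q = \floor{\frac{n}{r+2}}$ and $p = 3q$. If $q = 0$ the claim is vacuous (the empty path-matching has order $0$), so I assume $q \geq 1$, which guarantees $p = 3q \geq 3 \geq 2$ and hence that the hypotheses of Theorem \ref{main} are met.

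First I would apply Theorem \ref{main} with $p_1 = \dots = p_r = p$ to obtain
$$\PM(p, \dots, p) \leq \ceiling{p - \frac{r}{3} + (r-1)\frac{p}{3}} = \ceiling{\frac{(r+2)p - r}{3}}.$$
Substituting $p = 3q$ makes the quantity inside the ceiling equal to $(r+2)q - \frac{r}{3}$, and since $(r+2)q$ is an integer this evaluates to $(r+2)q - \floor{\frac{r}{3}} \leq (r+2)q$. Because $q = \floor{\frac{n}{r+2}} \leq \frac{n}{r+2}$, we have $(r+2)q \leq n$, and therefore $\PM(p, \dots, p) \leq n$.

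Finally, by the definition of $\PM$, the inequality $n \geq \PM(p, \dots, p)$ means precisely that every $r$-coloring of $K_n$ contains a monochromatic \lf{} of order at least $p = 3\floor{\frac{n}{r+2}}$, which is the assertion of the corollary. The only points that require any care are the trivial case $q = 0$ and the floor/ceiling bookkeeping in passing from $\ceiling{(r+2)q - \frac{r}{3}}$ to the clean upper bound $(r+2)q$; I do not expect a substantive obstacle, since the corollary is a direct specialization of Theorem \ref{main} together with the observation that setting $p$ to be a multiple of $3$ eliminates the rounding loss in the formula.
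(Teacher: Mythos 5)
Your proof is correct and is exactly the intended derivation: the paper states Corollary \ref{diagonal} as an immediate specialization of Theorem \ref{main} to $p_1=\dots=p_r=3\floor{\frac{n}{r+2}}$, with the same observation that choosing $p$ divisible by $3$ makes the ceiling collapse to $(r+2)\floor{\frac{n}{r+2}}-\floor{\frac{r}{3}}\leq n$. Your handling of the degenerate case $q=0$ and the floor/ceiling bookkeeping is sound, so there is nothing to add.
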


This is sharp if $n$ is divisible by $r+2$ as shown by the extremal coloring
$\left[\frac{3n}{r+2}, \frac{n}{r+2}, \dots, \frac{n}{r+2}\right].$

We note that establishing Theorem \ref{main2} requires a bit more technical work than Theorem \ref{main}; however, since Theorem \ref{main2} is tight in many more cases than Theorem \ref{main}, it is worth it.  However, if one was only interested in Corollary \ref{diagonal}, Theorem \ref{main} would suffice.

\section{Covering designs, 1-cores, and a deficiency formula for \lfs{}}\label{tools}

The proof of Theorems \ref{main2} and \ref{main} are based on a minimax theorem on \lfs{} derived from a result of Las Vergnas (which provides an analogue of Tutte's 1-factor theorem for path-matchings).  Interestingly, when we apply this minimax theorem to $r$-colored complete graphs, we need a suitable estimate on block sizes in covering designs (which can be also formulated as an estimate on the sizes of $1$-cores in colored complete graphs).

We describe all of this in detail in the following subsections.

\subsection{Ramsey numbers of covering designs}
A {\em covering design} is a family of sets called {\em blocks} in an $n$-element set $V$ such that each pair of $V$ is covered by {\em at least one block}. If all blocks have the same size $p$, then $C(n,p)$ is used to denote the minimum number of blocks in a covering design. The asymptotics of $C(n,p)$ for fixed $p$ was determined by Erd\H os and Hanani \cite{EH} and  the breakthrough of R. M. Wilson \cite{W} provided equality with constructing block designs for every admissible $n\ge n_0(p)$.
%If $n$ is not much larger than $p$ then better results are known.

One can formulate the inverse problem of finding $C(n,p)$ as a Ramsey problem. For given $r,p$ find the smallest $n=R_r(p)$ such that every covering design on $n$ vertices with $r$ blocks must contain a block of size at least $p$. Mills \cite{M} determined the asymptotic of $R_r(p)/p$ for $r\le 13$ and this ratio is also known for values of $r$ in the form $q^2+q+1$ or $q^2+q$ when $PG(2,q)$ exists (see the excellent survey of F\"uredi \cite[Chapter 7]{FUSUR}).  This problem was also studied, using a different formulation by Hor\'ak and Sauer \cite{HS}.  However, there is no conjecture for the limit of $R_r(p)/p$ for general $r$.

For our goals we consider covering designs with {\em variable block sizes}, which leads to the off-diagonal case of the Ramsey number $R_r(p)$.
In the next section we will obtain estimates for this Ramsey number.

%\begin{theorem}\label{covdes} Let $r\geq 2$ and let $p_1\ge p_2\ge \dots \ge p_r\geq 2$ be integers.
%If
%$$
%n\geq
%\max\left\{p_1,\ \ceiling{\frac{p_1+p_2+p_3}{2}}-1,\ \ceiling{\frac{p_1}{3}-\frac{r}{3} + \sum_{i=1}^{r}\frac{p_i}{3}}\right\},
%$$
%then every covering design on $[n]$ with $r$ blocks $B_1,\dots B_r$ contains a block $B_i$ such that $|B_i|\ge p_i$ for some $i\in [r]$.
%\end{theorem}

\subsection{Ramsey numbers of $1$-cores}
The Ramsey number of covering designs can be reformulated in graph theoretic language as the Ramsey number of graphs with minimum degree at least one, i.e. graphs without isolated vertices.  With a slight abuse of the original definition, we say that $G$ is a {\em $1$-core} if $G$ has minimum degree at least one.  (The $k$-core of a graph $G$ was defined by Seidman \cite{S} as the largest {\em connected} subgraph of $G$ with minimum degree at least $k$, subsequently many papers \cite{B} and textbooks \cite{Bo} define it without the connectivity condition.)  To see that the Ramsey number of the family of 1-cores is the same as the Ramsey number of a covering design, given an $r$-coloring of $K_n$, we can replace the 1-core of color $i$ with a clique of color $i$ (allowing for edges to have multiple colors) without changing the size of the 1-core and thus each clique corresponds to a block in the covering design language.

Given positive integers $r, p_1, \dots, p_r$, let $\RC(p_1, \dots, p_r)$ be the smallest integer $n$ such that in every $r$ coloring of the edges of $K_n$, there exists a 1-core of color $i$ and order at least $p_i$ for some $i\in [r]$.  Equivalently, $\RC(p_1, \dots, p_r)$ is the smallest integer $n$ such that the edges of $K_n$ cannot be covered with cliques of order $p_1-1, \dots, p_r-1$.  If $p_1=\dots=p_r=p$, we write $\RC_r(p)$ instead of  $\RC(p, \dots, p)$.  We also note that Observation \ref{p_i=2} and Proposition \ref{all3} apply with $\RC$ in place of $\PM$.

First note the following which essentially means that we can assume $p_i\geq 3$ for all $i\in [r]$.

\begin{observation}\label{p_i=2}
For all integers $r\geq 2$ and $p_1\geq p_2\geq \dots \geq p_r\geq 2$,
$\PM(p_1, \dots, p_r)=\PM(p_1, \dots, p_r, 2)=\PM(p_1, \dots, p_r, 1)$ and similarly for $R^{1C}$.
\end{observation}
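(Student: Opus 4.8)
The plan is to establish the two equalities separately and then observe that the same reasoning applies to $\RC$. I would begin with the right-hand equality $\PM(p_1,\dots,p_r,2)=\PM(p_1,\dots,p_r,1)$, which holds because the two Ramsey problems have \emph{identical} success conditions for every coloring. Indeed, since every component of a \lf{} has at least two vertices, the smallest nonempty \lf{} has order $2$; consequently ``a \lf{} of color $r+1$ of order at least $1$'' and ``a \lf{} of color $r+1$ of order at least $2$'' each hold precisely when color $r+1$ appears on some edge. As the target is the same in both problems for every coloring of every $K_n$, the two thresholds must coincide.

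For the left-hand equality $\PM(p_1,\dots,p_r)=\PM(p_1,\dots,p_r,2)$ I would use the standard unused-color argument, proving both inequalities. Write $n=\PM(p_1,\dots,p_r)$. To see $\PM(p_1,\dots,p_r,2)\le n$, take any $(r+1)$-coloring of $K_n$: if color $r+1$ is used on some edge, that edge is a \lf{} of color $r+1$ and order $2=p_{r+1}$; otherwise only colors in $[r]$ appear, so the definition of $n$ yields a \lf{} of color $i$ and order at least $p_i$ for some $i\in[r]$. Either way the $(r+1)$-color target is met. For the reverse inequality, set $m=\PM(p_1,\dots,p_r,2)$ and view any $r$-coloring of $K_m$ as an $(r+1)$-coloring in which color $r+1$ is absent; by the definition of $m$ there is a \lf{} of color $i$ and order at least $p_i$ for some $i\in[r+1]$, and since no edge has color $r+1$ we must have $i\in[r]$. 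Hence $m$ suffices for the $r$-color problem, giving $\PM(p_1,\dots,p_r)\le m$.

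Finally, I would note that every step transfers verbatim to $\RC$: a $1$-core has minimum degree at least one, so its smallest nonempty instance is again a single edge of order $2$, which makes the threshold-$1$ and threshold-$2$ conditions coincide, and the unused-color monotonicity argument uses no property of \lfs{} beyond this fact.

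I do not expect a genuine obstacle here; the only points requiring care are matching the formal definition of a \lf{} (components of order at least $2$, so no \lf{} has order exactly $1$) against the threshold values $1$ and $2$, and using the convention that an $(r+1)$-coloring need not use all $r+1$ colors.
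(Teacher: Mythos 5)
Your proof is correct: the unused-color argument (a coloring need not use every color, and any nonempty \lf{} or $1$-core has order at least $2$) is exactly the reasoning behind this statement, which the paper records as an observation without proof precisely because this argument is immediate. Your write-up matches the intended approach, including the careful point that the thresholds $1$ and $2$ coincide since no \lf{} or $1$-core has order exactly $1$.
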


Next we have the following result when $p_1=\dots=p_r=3$.

\begin{proposition}\label{all3}
Let $r\geq 2$ be an integer.  Then $\PM_r(3)=\RC_r(3)$ is the smallest integer $n$ such that $\binom{n}{2}>r$.  In other words, $\PM_r(3)=\RC_r(3)=\floor{\frac{\sqrt{8r+1}+1}{2}}+1$.
\end{proposition}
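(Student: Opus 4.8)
The plan is to establish the proposition in two stages: first prove the identity $\PM_r(3)=\RC_r(3)$, and then compute $\RC_r(3)$ directly from its covering-design reformulation, finishing with the elementary arithmetic that produces the closed form.

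For the first stage, I would argue that, for a fixed color $i$ in any $r$-coloring, there is a \lf{} of color $i$ and order at least $3$ if and only if there is a $1$-core of color $i$ and order at least $3$. One direction is immediate: a \lf{} of order at least $3$ is a graph with minimum degree at least $1$ on at least $3$ vertices, hence is itself a $1$-core of order at least $3$; applying this to every coloring gives $\RC_r(3)\le \PM_r(3)$. For the converse I would give a short structural argument. Suppose color $i$ contains a $1$-core of order at least $3$, i.e.\ a color-$i$ subgraph with at least $3$ non-isolated vertices. Pick any color-$i$ edge $uv$, a third non-isolated vertex $w\notin\{u,v\}$, and a color-$i$ neighbor $z$ of $w$. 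If $z\in\{u,v\}$ then $w,u,v$ span a color-$i$ copy of $P_3$; otherwise $z\notin\{u,v,w\}$, and $uv$ together with $wz$ is a color-$i$ \lf{} of order $4$. In either case we obtain a color-$i$ \lf{} of order at least $3$, which gives $\PM_r(3)\le\RC_r(3)$, so the two Ramsey numbers coincide.

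For the second stage, I would invoke the reformulation stated above, namely that $\RC_r(3)=\RC(3,\dots,3)$ is the smallest $n$ for which the edges of $K_n$ cannot be covered with $r$ cliques of order $3-1=2$. A clique of order $2$ is a single edge, so this is the smallest $n$ for which $E(K_n)$ cannot be covered by $r$ edges. Since $K_n$ has exactly $\binom{n}{2}$ edges and a covering by single edges is possible precisely when $\binom{n}{2}\le r$, the edges fail to be coverable exactly when $\binom{n}{2}>r$. Hence $\RC_r(3)$ is the smallest integer $n$ with $\binom{n}{2}>r$, as claimed.

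Finally, I would convert this into the stated closed form by solving $\frac{n(n-1)}{2}>r$, equivalently $n>\frac{1+\sqrt{8r+1}}{2}$, whose smallest integer solution is $\floor{\frac{\sqrt{8r+1}+1}{2}}+1$. I do not anticipate any real obstacle here: the only point needing a little care is the boundary case in which $r$ is a triangular number, so that $\frac{1+\sqrt{8r+1}}{2}$ is itself an integer, and one checks that the floor-plus-one formula still returns the correct value. The conceptual crux of the whole argument is therefore the simple observation in the second paragraph that a monochromatic $1$-core of order at least $3$ always harbors a monochromatic \lf{} of order at least $3$.
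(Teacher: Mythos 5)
Your proof is correct and is essentially the paper's argument: the paper's own proof is a two-line pigeonhole observation that $\binom{n}{2}>r$ forces some color to appear on two edges (yielding both a monochromatic $1$-core and a \lf{} of order at least $3$), while $\binom{n}{2}\le r$ admits a coloring using each color at most once. Your version merely reorganizes this — first equating $\PM_r(3)$ with $\RC_r(3)$, then counting edge-covers — and fills in the small details (e.g.\ that two same-colored edges form a $P_3$ or a $2P_2$) that the paper leaves implicit.
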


\begin{proof}
If $\binom{n}{2}>r$, then in every $r$-coloring of $K_n$, some color must be used more than once.  If $\binom{n}{2}\leq r$, then there exists an $r$-coloring of $K_n$ where each color is used at most once.
\end{proof}

We now give estimates on $\RC(p_1, \dots, p_r)$ where the focus is on the off-diagonal case, which to the best of our knowledge has not been studied.

In the language of covering designs, we have $\RC_r(p)\leq n$ if and only if $C(n,p-1)>r$.  So the vast literature on covering designs gives upper bounds on $\RC_r(p)$.  A simple lower bound given by Erd\H{o}s and Hanani \cite{EH} is $C(v, p-1)\geq \binom{v}{2}/\binom{p-1}{2}=\frac{v(v-1)}{(p-1)(p-2)}$.  A more refined lower bound is the so-called Sch\"onheim bound \cite{Sch} , 
%\LD{Is there a nice generalization of this when all $p_i$'s are not equal that we can use in place of Observation \ref{edgecount}?}
which says $C(v, p-1)\geq \ceiling{\frac{v}{p-1}\ceiling{\frac{v-1}{p-2}}}$.

This first Observation is just a generalization of the Erd\H{o}s-Hanani lower bound in the non-uniform case.  It says that if the total number of edges in cliques of orders $p_1-1, \dots, p_r-1$ respectively is less than the number of edges in $K_{n}$, then it is not possible to cover $K_n$ with those cliques.  While this Observation is almost trivial, there are situations where this is the best estimate to use.
%Also in the Appendix, we show how this observation alone can prove an exact result about the Ramsey number of fractional matchings.

\begin{observation}\label{edgecount}
Let $n,r\geq 2$ and let $p_1\ge p_2\ge \dots \ge p_r\geq 2$ be integers.  If
\begin{equation}
\sum_{i=1}^r\binom{p_i-1}{2}<\binom{n}{2},
\end{equation}
then $\RC(p_1, \dots, p_r)\leq n$.
\end{observation}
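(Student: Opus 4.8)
The plan is to use the clique-cover reformulation of $\RC$ stated just above: $\RC(p_1, \dots, p_r) \le n$ holds exactly when the edges of $K_n$ \emph{cannot} be covered by cliques of orders $p_1-1, \dots, p_r-1$. So I would argue by contradiction, assuming such a cover exists and deriving an impossibility from a pure edge count. Since $\RC(p_1,\dots,p_r)$ is by definition the smallest $n$ for which no such cover exists, establishing non-coverability for the given $n$ immediately yields $\RC(p_1,\dots,p_r)\le n$, with no separate monotonicity argument needed.

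Concretely, suppose the edges of $K_n$ are covered by cliques $Q_1, \dots, Q_r$, where $Q_i$ has order $p_i-1$. Then $Q_i$ contains exactly $\binom{p_i-1}{2}$ edges, so the total number of edge-slots, counting each edge once for every clique containing it, equals $\sum_{i=1}^r \binom{p_i-1}{2}$. Because a cover forces each of the $\binom{n}{2}$ edges of $K_n$ to lie in at least one clique, the number of distinct covered edges is $\bigl|\bigcup_{i=1}^r E(Q_i)\bigr| \le \sum_{i=1}^r \binom{p_i-1}{2}$ by subadditivity of the union, and this union must be all of $E(K_n)$. Hence $\binom{n}{2} \le \sum_{i=1}^r \binom{p_i-1}{2}$, contradicting the hypothesis $\sum_{i=1}^r \binom{p_i-1}{2} < \binom{n}{2}$. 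Therefore no such cover exists, and $\RC(p_1, \dots, p_r) \le n$.

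I expect no genuine obstacle here, as this is simply the non-uniform analogue of the Erd\H{o}s--Hanani edge bound and amounts to a one-line pigeonhole count. The only point deserving any care is that a single edge may be covered by several cliques; but this merely inflates the right-hand side, since we bound the number of \emph{distinct} edges by the number of edge-slots, so the inequality points the correct way and the argument goes through unchanged.
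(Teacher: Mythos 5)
Your proof is correct and is essentially the paper's own argument: the paper treats this observation as immediate from exactly this edge count, noting that cliques of orders $p_1-1,\dots,p_r-1$ span at most $\sum_{i=1}^r\binom{p_i-1}{2}$ edges in total and hence cannot cover the $\binom{n}{2}$ edges of $K_n$. Your added care about multiply-covered edges and about deducing $\RC(p_1,\dots,p_r)\le n$ directly from the minimality in the clique-cover reformulation is just the right amount of rigor for what the paper calls an ``almost trivial'' statement.
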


The next bound is essentially a non-uniform generalization of results in \cite{HS} and \cite{M}.  Again there are situations where this next bound is the best estimate to use.  This result can be seen to be tight when a projective plane of order $q$ exists, $\ell\in \{0,1\}$ and $p_1=\dots=p_r=\ceiling{\frac{(q+1)n}{q^2+q+\ell}}$.

\begin{proposition}\label{1-corehalf} Let $n,r\geq 2$ and let $p_1\ge p_2\ge \dots \ge p_r\geq 2$ be integers.  If there exists an integer $1\leq t\leq r-1$ such that
\begin{equation}
p_1\leq \ceiling{\frac{n+t-1}{t}} ~\text{ and }~ \sum_{i=1}^r(p_i-1)<(t+1)n,
\end{equation}
then $\RC(p_1, \dots, p_r)\leq n$.
\end{proposition}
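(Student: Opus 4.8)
The plan is to use the reformulation of $\RC$ recorded just above the statement: $\RC(p_1,\dots,p_r)\le n$ holds precisely when the edges of $K_n$ \emph{cannot} be covered by cliques of orders $p_1-1,\dots,p_r-1$. So I would argue by contradiction. Suppose such a covering exists, and let $B_1,\dots,B_r\subseteq V(K_n)$ be the corresponding vertex sets (blocks); since enlarging a clique only covers more pairs, I may assume $|B_i|=p_i-1$ for each $i$, and every pair $\{v,w\}$ of vertices lies in some $B_i$. The whole argument is then a double count of vertex--block incidences, designed to contradict the hypothesis $\sum_{i=1}^r(p_i-1)<(t+1)n$. (The degenerate case $p_1=2$ forces all $B_i$ to be single vertices, which cover no edge, so no covering exists and the conclusion is immediate; hence I may assume $p_1\ge 3$.)

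For each vertex $v$ let $d_v$ denote the number of blocks containing $v$. I would combine two facts. First, counting incidences by blocks gives $\sum_{v\in V}d_v=\sum_{i=1}^r|B_i|=\sum_{i=1}^r(p_i-1)$. Second, I claim $d_v\ge t+1$ for every $v$. Indeed, because every pair $\{v,w\}$ is covered, the blocks through $v$ must together contain all $n-1$ vertices other than $v$; and each such block, having at most $p_1-1$ vertices, contributes at most $p_1-2$ of them, so $d_v(p_1-2)\ge n-1$.

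The crux is to convert the size bound $p_1\le\ceiling{\frac{n+t-1}{t}}$ into $d_v\ge t+1$, and here the only real work is a ceiling estimate. Since $\ceiling{\frac{n+t-1}{t}}<\frac{n+t-1}{t}+1=\frac{n+2t-1}{t}$, multiplying by $t$ gives $tp_1<n+2t-1$, that is $t(p_1-2)<n-1$. Combined with $d_v(p_1-2)\ge n-1$ and $p_1-2\ge 1$, this forces $d_v>t$, i.e.\ $d_v\ge t+1$. Feeding this into the incidence count yields
\[
\sum_{i=1}^r(p_i-1)=\sum_{v\in V}d_v\ge (t+1)n,
\]
contradicting the hypothesis $\sum_{i=1}^r(p_i-1)<(t+1)n$. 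Hence no covering exists and $\RC(p_1,\dots,p_r)\le n$.

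Since this is a short double count, I do not expect a serious obstacle; the only point needing care is the \emph{strict} inequality in the ceiling estimate, which is exactly what pushes $d_v$ strictly past $t$ (so that it is at least $t+1$). I would also remark that the role of the constraint $t\le r-1$ is merely to keep $t+1\le r$, so that the derived bound $\sum_i(p_i-1)\ge (t+1)n$ is a genuine constraint on $r$ blocks, and that the extremal behavior matches the asserted tightness: a projective plane of order $q$ has each point on $q+1$ lines, which is the case $t=q$, $d_v=t+1$ with equality throughout.
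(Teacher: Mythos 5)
Your proof is correct and is in substance the paper's own argument run in the contrapositive direction, translated into the covering-design language that the paper itself notes is equivalent: the paper uses the hypothesis $\sum_{i}(p_i-1)<(t+1)n$ (by averaging vertex--color incidences) to find a vertex seeing at most $t$ colors and then pigeonholes its $n-1$ edges to produce a monochromatic $1$-core of order at least $\ceiling{\frac{n+t-1}{t}}\ge p_1$, whereas you use the bound $p_1\le\ceiling{\frac{n+t-1}{t}}$ to pigeonhole the blocks through each vertex (so every vertex lies in at least $t+1$ blocks) and then sum the very same incidences to contradict $\sum_{i}(p_i-1)<(t+1)n$. Since both proofs rest on exactly the same double count and the same ceiling estimate (your $t(p_1-2)<n-1$ is the paper's $1+\ceiling{\frac{n-1}{t}}=\ceiling{\frac{n+t-1}{t}}$ in disguise), this is essentially the same approach, and your extra care with the degenerate case $p_1=2$ and with the strictness of the inequality is sound.
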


We say that a vertex \emph{sees a color} if it is incident with an edge of that color.

\begin{proof}
Suppose there exists an integer $1\leq t\leq r-1$ such that $p_1\leq \ceiling{\frac{n+t-1}{t}}$ and $\sum_{i=1}^r(p_i-1)<(t+1)n$ and consider an $r$-edge coloring of a complete graph on a set $V$ of vertices where $|V|=n$.  For all $i\in [r]$, let $S_i$ be the 1-core of color $i$ and  suppose for contradiction that $|S_i|\leq p_i-1$ for all $i\in [r]$.  So for all $i\in [r]$, there exists at least $n-p_i+1$ vertices which do not see color $i$.

So on average, the number of colors a vertex does not see is at least $$\frac{\sum_{i=1}^r (n-p_i+1)}{n}=r-\frac{\sum_{i=1}^r (p_i-1)}{n}>r-(t+1).$$
This implies some vertex $v$ sees at most $t$ colors.  So $v$ is contained in a monochromatic 1-core of order at least $1+\ceiling{\frac{n-1}{t}}=\ceiling{\frac{n+t-1}{t}}$ contradicting the original assumption.
\end{proof}

Now we come to our main result of this section.  We note that this result can be seen to be tight in certain cases, such as when either of the first two terms are the maximum.

\begin{theorem}\label{1-core} Let $r\geq 2$ and let $p_1\ge p_2\ge \dots \ge p_r\geq 2$ be integers.  Then
$$\RC(p_1, \dots, p_r)\leq \max\left\{p_1,\ \ceiling{\frac{p_1+p_2+p_3}{2}}-1,\ \ceiling{\frac{p_1}{3}-\frac{r}{3} + \sum_{i=1}^{r}\frac{p_i}{3}}\right\}.$$
\end{theorem}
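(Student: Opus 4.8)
The plan is to prove the equivalent covering statement. Write $N$ for the maximum on the right-hand side; by the reformulation given just before the theorem, the claim $\RC(p_1,\dots,p_r)\le N$ is exactly the assertion that $K_N$ cannot be covered by cliques of orders $p_1-1,\dots,p_r-1$. It therefore suffices to show that whenever $K_n$ \emph{can} be covered by cliques $C_1,\dots,C_r$ with $q_i:=|C_i|\le p_i-1$, one necessarily has $n<N$; applied at $n=N$ this gives non-coverability and hence the theorem. Since every case below will bound $n$ strictly below one of the three quantities $p_1$, $\ceiling{\frac{p_1+p_2+p_3}{2}}-1$, and $\ceiling{\frac{p_1}{3}-\frac{r}{3}+\sum_{i=1}^{r}\frac{p_i}{3}}$ (each of which is $\le N$), this is enough. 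By Observation~\ref{p_i=2} I may assume $p_i\ge 3$ throughout.

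The engine is a minimum-degree count on the vertex--clique incidences, in the spirit of Proposition~\ref{1-corehalf}. For a vertex $v$ let $d_v$ be the number of cliques containing it; because the cliques through $v$ must cover all $n-1$ edges at $v$, their union is all of $V$, and $\sum_v d_v=\sum_{i=1}^{r} q_i\le \sum_{i=1}^{r}(p_i-1)=\sum_{i=1}^{r}p_i-r$. I first dispose of two easy regimes. If some vertex lies in a single clique $C$, then $C=V$, so $n=|C|\le p_1-1<p_1$, which is below the first term. If instead every vertex lies in at least three cliques, then $3n\le\sum_v d_v\le \sum_{i=1}^{r}p_i-r$, so $n\le\frac{1}{3}(\sum_{i=1}^{r}p_i-r)$, which (using $p_1\ge 3$) is at most $C-1$ and hence below the third term.

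The crux is the remaining case, in which the vertex $v^*$ minimizing $d_{v^*}$ lies in exactly two cliques $C_a,C_b$ with $C_a\cup C_b=V$. Write $X=C_a\setminus C_b$, $Y=C_b\setminus C_a$, $Z=C_a\cap C_b\ni v^*$, and $z=|Z|$; both $X$ and $Y$ are nonempty, else a clique equals $V$ and we are in the first regime. The complete bipartite graph between $X$ and $Y$ must then be covered by the cliques other than $C_a,C_b$. Split once more. If every vertex of $X\cup Y$ lies in at least three cliques, then combining $d_v\ge 3$ on $X\cup Y$ with $d_v\ge 2$ on $Z$ gives $\sum_v d_v\ge 3(n-z)+2z=3n-z$; since $z\le q_a\le p_1-1$, this yields $3n\le \sum_{i=1}^{r}p_i-r+p_1-1$, so $n\le C-1$. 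This is precisely where the extra summand $\frac{p_1}{3}$ in the third term is consumed. Otherwise some $w\in X\cup Y$, say $w\in X$, lies in exactly two cliques; its second clique $C_c$ must contain every $Y$-neighbor of $w$, so $Y\subseteq C_c$. This leaves three cliques $C_a,C_b,C_c$ of controlled sizes with $Y$ doubly covered, and the target is $n\le\ceiling{\frac{p_1+p_2+p_3}{2}}-1$, the second term.

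The main obstacle is exactly this last sub-case. The three cliques $C_a,C_b,C_c$ do not by themselves deliver $\ceiling{\frac{p_1+p_2+p_3}{2}}-1$, because when $|X|\ge 2$ the edges from $X\setminus\{w\}$ to $Y$ still require further cliques; only when $|X|=1$ does the naive count $q_a+q_b+q_c\ge x+2y+2z+1\ge 2n$ close. The natural route is induction on the number of colors (or on $n$): having placed $Y$ inside the single clique $C_c$, one recurses on the covering of $(X\setminus\{w\})\times Y$, while carefully tracking which of the remaining cliques are bounded by $p_1-1$, $p_2-1$, and $p_3-1$, so that the crude ``every vertex in at least two cliques'' estimate $\sum_i q_i\ge 2n$ is sharpened to $\tfrac12(p_1+p_2+p_3)$ rather than the weaker $\tfrac12(\sum_{i=1}^{r}p_i-r)$. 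Making this bookkeeping tight, and ensuring the residual recursion terminates in the clean three-clique bound, is the delicate heart of the proof.
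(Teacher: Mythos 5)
Your reduction to clique covers, the degree count $\sum_v d_v=\sum_i q_i\le\sum_i p_i-r$, and the first three regimes (a vertex lying in only one clique; every vertex in at least three cliques; every vertex of $X\cup Y$ in at least three cliques) are all correct. But the final sub-case --- some $w\in X\cup Y$ in exactly two cliques, with $|X|\ge 2$ --- is a genuine gap, and you say so yourself: what you offer there is a plan (``induction on the number of colors\dots careful bookkeeping''), not a proof. Concretely, you never formulate the inductive statement; the residual object to be covered, the complete bipartite graph between $X\setminus\{w\}$ and $Y$, is not a complete graph, so the theorem being proved cannot simply be invoked on it; and nothing shows the recursion terminates in the ``clean three-clique bound.'' Since the middle term $\ceiling{\frac{p_1+p_2+p_3}{2}}-1$ of the maximum is exactly what this case must deliver, and the theorem is tight there, the unproved step is the crux of the theorem rather than a technicality.

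For comparison, the paper needs no induction: it works with the set $X_2$ of \emph{all} vertices seeing exactly two colors simultaneously. Either some color is seen by every vertex of $X_2$, whence $|X_2|\le p_1-1$ and a single global count ($n\le\sum_i|S_i|-|X_2|-2|X_3|$ together with $|X_3|\ge n-p_1+1$) produces the third term of the maximum; or $X_2$ contains a rainbow triangle $\{a,b,c\}$ on three colors $i,j,k$, and then \emph{every} vertex of $V$ sees at least two of $i,j,k$, so $2n\le|S_i|+|S_j|+|S_k|\le p_1+p_2+p_3-3$, which disposes of the middle term outright. Your own configuration can in fact be finished the same way, with no recursion: in your last sub-case you have $Y\cup\{w\}\subseteq C_c$; now ask whether some $y\in Y$ also lies in exactly two cliques. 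If yes, those two cliques must be $C_b$ and $C_c$, and the edges from $y$ to $X$ force $X\subseteq C_c$; then every vertex lies in at least two of $C_a,C_b,C_c$, so $2n\le q_a+q_b+q_c\le p_1+p_2+p_3-3$, i.e.\ $n$ falls below the second term. If no, every vertex of $Y$ has degree at least three, so $\sum_v d_v\ge 2\left(|X|+|Z|\right)+3|Y|=2n+|Y|\ge 2n+\left(n-q_a\right)\ge 3n-(p_1-1)$, which is exactly your earlier computation yielding the third term. Replacing the proposed recursion by this dichotomy makes your proof complete --- and it is then essentially the paper's argument rephrased in covering-design language.
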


\begin{proof}
Consider an $r$-edge coloring of a complete graph on a set $V$ of vertices where
\begin{equation}\label{V}
|V|=n=
\max\left\{p_1,\ \ceiling{\frac{p_1+p_2+p_3}{2}}-1,\ \ceiling{\frac{p_1}{3}-\frac{r}{3} + \sum_{i=1}^{r}\frac{p_i}{3}}\right\}.
\end{equation}

For all $i\in [r]$, let $S_i$ be the 1-core of color $i$ and suppose for contradiction that $|S_i|\leq p_i-1$ for all $i\in [r]$.

Consider the partition $V=X_1\cup X_2\cup X_3$, where for $i\in [2]$, $X_i$ is the set of vertices which are incident with edges of exactly $i$ different colors and $X_3$ is the set of vertices incident with edges of at least $3$ different colors.  Note that every vertex is incident with edges of at least two different colors (i.e. $X_1=\emptyset$); otherwise there is a monochromatic 1-core on $n$ vertices, but by \eqref{V} and the indirect assumption, we have $p_1\leq n\leq p_1-1$, a contradiction.

Note that
\beq\label{sieve}
|V|=n \leq \sum_{i=1}^{r} |S_i| -|X_2|-2|X_3|,
\eeq
since the vertices in $X_2$ are counted twice in the sum $\sum_{i=1}^{r} |S_i|$, and the vertices in $X_3$ are counted at least three times in the sum $\sum_{i=1}^{r} |S_i|$.

\begin{claim}\label{claim}
$|X_2| \leq p_1-1$.
\end{claim}

\begin{proof}
Since the graph induced by $X_2$ is locally 2-colored (i.e. each vertex sees at most two colors), either there exists a color $i$ such that every vertex in $X_2$ sees color $i$, in which case by the indirect assumption, $|X_2|\leq p_i-1\leq p_1-1$, or there are a total of at most three colors used on $X_2$.  We now show that the latter is impossible (because of the choice of $n$).

Suppose there is no color seen by every vertex in $X_2$ and let $i, j, k$ be the three colors used on $X_2$.  In this case, it can be easily seen that $X_2$ contains a set of three vertices $\{a,b,c\}$ such that $ab$ has color $i$, $ac$ has color $j$ and $bc$ has color $k$.  Since every edge incident with $a$ has color $i$ or $j$, every edge incident with $b$ has color $i$ or $k$, and every edge incident with $c$ has color $j$ or $k$, it is the case that for all $v\in V\setminus \{a,b,c\}$, $v$ sends at least two of the colors $i,j,k$ to $\{a,b,c\}$.  So, regardless of whether $X_3=\emptyset$ or not, every vertex in $V$ sees at least two of the colors $i,j,k$ and thus $$2n\leq |S_i|+|S_j|+|S_k|\leq |S_1|+|S_2|+|S_3|\leq p_1+p_2+p_3-3,$$
contradicting the choice of $n$.
\end{proof}

Now by Claim \ref{claim} we have
\beq\label{x3}
|X_3|=n-|X_2| \geq n-p_1+1.
\eeq

Then from \eqref{sieve}  and \eqref{x3} we get
\begin{align*}
n \leq \sum_{i=1}^{r} |S_i| -|X_2|-2|X_3|&\leq \sum_{i=1}^{r} p_i - n-|X_3|-r\leq \sum_{i=1}^{r} p_i - n-(n-p_1+1)-r
\end{align*}
which, by \eqref{V}, implies $$3\ceiling{\frac{p_1}{3}-\frac{r}{3} + \sum_{i=1}^{r} \frac{p_i}{3}}\leq 3n\leq p_1-(r+1)+\sum_{i=1}^r p_i,$$
a contradiction.
\end{proof}

\section{Large monochromatic \lfs{}}

\subsection{Deficiency formula for \lfs{}}

The deficiency formula for \lfs{} can be derived from a special case of a result of Las Vergnas \cite{LV}.  Recall that a \lf{} can always be written as vertex disjoint union of $P_2$-s and $P_3$-s, so the maximum order of a \lf{} in a graph is equal to the maximum order of a \lf{} containing only $P_2$ and $P_3$ components. Let $f,g$ be integer-valued functions on the vertex set $V$ of a graph $G$ such that $0\le g(v)\le 1\le f(v)$ for all $v\in V$.  A {\em $(g,f)$-factor} is a subgraph $F$ of $G$ satisfying $g(v)\le d_F(v)\le f(v)$ for all $v\in V$. Las Vergnas \cite{LV} gave a necessary and sufficient condition for the existence of a $(g,f)$ factor of a graph. If $g\equiv 1,f\equiv 2$ then the existence of a $(g,f)$-factor is equivalent to the existence of a  {\em perfect \lf{}}; that is, a \lf{} covering all vertices of $G$. In this case the condition simplifies and can be stated as follows.  Let $q_G(S)$ denote the number of isolated vertices of a graph $G$ in a set $S\subset V(G)$.

\begin{theorem}[Las Vergnas \cite{LV}]\label{lvth}There exists a perfect \lf{} in $G$ if and only if $2|X|\ge q_G(V(G)\setminus X)$ for all $X\subset V(G)$.
\end{theorem}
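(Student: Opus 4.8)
The plan is to prove the two directions separately, handling the ``only if'' direction by a direct counting argument and the ``if'' direction by reducing to Tutte's $1$-factor theorem. First I record the equivalence already noted in the paper: a perfect \lf{} is exactly a spanning subgraph $F$ with $1\le d_F(v)\le 2$ for every $v$, i.e.\ a $(1,2)$-factor, because any cycle component of such an $F$ can be turned into a path by deleting a single edge without uncovering any vertex. It therefore suffices to characterize when $G$ has a $(1,2)$-factor, and I read $q_G(V(G)\setminus X)$ as the number of isolated vertices of the induced subgraph $G[V(G)\setminus X]$; equivalently, the vertices outside $X$ all of whose $G$-neighbors lie in $X$.

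Necessity is straightforward. Given a perfect \lf{} $F$ and a set $X\subseteq V(G)$, let $I$ be the set of isolated vertices of $G[V(G)\setminus X]$, so $|I|=q_G(V(G)\setminus X)$. Each $v\in I$ has all its $G$-neighbors in $X$, and since $d_F(v)\ge 1$ and $F\subseteq G$, the vertex $v$ sends at least one $F$-edge into $X$. Choosing one such edge for every $v\in I$ gives $|I|$ distinct edges of $F$, each with exactly one endpoint in $X$. The number of $F$-edges meeting $X$ is at most $\sum_{w\in X}d_F(w)\le 2|X|$, so $q_G(V(G)\setminus X)=|I|\le 2|X|$, which is the asserted inequality.

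For sufficiency I would use Tutte's classical vertex-splitting gadget to convert the $(1,2)$-factor problem into a perfect-matching problem. Concretely, I would build an auxiliary graph $H$ in which each vertex $v$ of $G$ is replaced by a gadget consisting of external vertices (one per incident edge) together with internal vertices chosen so that, in any perfect matching of $H$, the gadget selects between $1$ and $2$ of the edges at $v$; each edge of $G$ becomes a link joining the two relevant external vertices. This is engineered so that perfect matchings of $H$ correspond exactly to $(1,2)$-factors of $G$. Applying Tutte's $1$-factor theorem to $H$, if $G$ had no perfect \lf{} then $H$ would have no perfect matching, producing a set $S\subseteq V(H)$ with more odd components in $H-S$ than $|S|$. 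The remaining task is to ``project'' this barrier $S$ back to a set $X\subseteq V(G)$ with $2|X|<q_G(V(G)\setminus X)$, contradicting the hypothesis.

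This translation step is where I expect the real difficulty to lie: one must argue that $S$ can be taken in a canonical form (for each split vertex, containing either all or none of certain gadget parts), verify that the internal vertices of untouched gadgets are forced to pair off, and then account precisely for how the odd components of $H-S$ arise from the isolated vertices of $G-X$ for the induced set $X$. Making the factor of $2$ (coming from $f\equiv 2$) emerge exactly is the crux of the argument. An alternative route that sidesteps the explicit gadget is to deduce the statement directly from the general Las Vergnas $(g,f)$-factor criterion by setting $g\equiv 1$, $f\equiv 2$ and simplifying the resulting deficiency expression; the same factor-of-$2$ bookkeeping then reappears there as the main computation.
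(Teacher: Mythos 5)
Your necessity argument is correct and complete: every isolated vertex of $G[V(G)\setminus X]$ has all of its $G$-neighbors in $X$, hence sends at least one edge of the perfect path-matching $F$ into $X$; these edges are pairwise distinct, and $X$ can absorb at most $\sum_{w\in X}d_F(w)\le 2|X|$ edges of $F$, giving $q_G(V(G)\setminus X)\le 2|X|$. This is the easy half of the theorem, and your reduction of perfect path-matchings to $(1,2)$-factors (delete one edge from each cycle component) is also fine.

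The sufficiency direction, however, is a plan rather than a proof, and the gap is exactly where you yourself place it. You never specify the gadget $H$ --- note that for a $(g,f)$-factor problem with $g\equiv 1 < 2\equiv f$ the classical Tutte splitting for $f$-factors does not apply verbatim, since degree flexibility forces extra internal vertices whose parity bookkeeping is delicate --- and you never carry out the projection of a Tutte barrier $S\subseteq V(H)$ back to a set $X\subseteq V(G)$ with $2|X|<q_G(V(G)\setminus X)$. That projection is the entire content of the theorem; without it you have only described how a proof might be organized. For context, the paper does not prove this statement at all: it quotes it from Las Vergnas as the specialization $g\equiv 1$, $f\equiv 2$ of his $(g,f)$-factor theorem, with the remark that the condition ``simplifies'' in this case. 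So your ``alternative route'' is the intended one, and it is genuinely short: in the general $(g,f)$-factor criterion, because $g(v)<f(v)$ for every vertex $v$, no odd-component correction terms arise, and the condition becomes $2|S|+\sum_{v\in T}\left(d_{G-S}(v)-1\right)\ge 0$ for all disjoint $S,T\subseteq V(G)$; for fixed $S$ the binding choice of $T$ is the set of isolated vertices of $G-S$, which yields precisely the displayed inequality. If you want a self-contained argument, carrying out that specialization is far less work than constructing and analyzing a matching gadget from scratch.
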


This result is ``self-refining'' in the sense that one can easily derive from it the minimax formula for the deficiency of \lfs{} (see \cite[Exercise 3.1.16]{LP} in which Berge's formula is derived from Tutte's theorem). Let $pd(G)$ be the {\em \lf{} deficiency} of $G$, the number of vertices uncovered by any  \lf{} of maximum order in $G$.

\begin{corollary}\label{lvthdef} $pd(G)=max\{q_G(V(G)\setminus X)-2|X|: X\subset V(G)\}$.
\end{corollary}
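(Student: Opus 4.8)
The plan is to derive the formula from the existence criterion in Theorem \ref{lvth} by the standard ``self-refining'' device, exactly as Berge's formula is obtained from Tutte's theorem: prove the easy lower bound by a direct counting argument, and obtain the matching upper bound by attaching to $G$ a gadget that absorbs precisely the conjectured deficiency and then invoking Theorem \ref{lvth} on the enlarged graph. Throughout I would write $d:=\max\{q_G(V(G)\setminus X)-2|X|:X\subseteq V(G)\}$ and note $d\ge 0$ (take $X=\emptyset$); the goal is $pd(G)=d$.

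First I would establish $pd(G)\ge d$. Fix any \lf{} $M$ and any $X\subseteq V(G)$, and let $I$ be the set of vertices that are isolated in $G-X$, so $|I|=q_G(V(G)\setminus X)$. Every vertex of $I$ has all of its $G$-neighbours inside $X$, so each edge of $M$ meeting $I$ has its other endpoint in $X$; since each vertex of $X$ has $M$-degree at most $2$, at most $2|X|$ vertices of $I$ are covered by $M$, leaving at least $q_G(V(G)\setminus X)-2|X|$ vertices of $I$ uncovered. As $M$ and $X$ are arbitrary, $pd(G)\ge d$.

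For the upper bound I would build $G^*$ from $G$ by adding $d$ \emph{gadgets}: for each $i\in[d]$ a hub $u_i$ joined to every vertex of $V(G)$, together with a pendant $u_i'$ whose only neighbour is $u_i$, with the hubs forming an independent set. In any perfect \lf{} $M^*$ of $G^*$ the pendant forces $u_iu_i'\in M^*$, so each hub has at most one further edge, which (as hubs are mutually nonadjacent and $u_j'$ is adjacent only to $u_j$) must go into $V(G)$; hence at most $d$ edges run between the hubs and $V(G)$. Consequently the set of edges of $M^*$ with both endpoints in $V(G)$ is a \lf{} of $G$ leaving at most $d$ vertices uncovered, giving $pd(G)\le d$ \emph{provided} $G^*$ admits a perfect \lf{}.

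The hard part is therefore verifying the Las Vergnas condition $2|X^*|\ge q_{G^*}(V(G^*)\setminus X^*)$ for every $X^*\subseteq V(G^*)$, which I would do by splitting $X^*$ into its trace $X_0$ on $V(G)$, its deleted hubs $A$, and its deleted pendants $B$, and then casing on whether every hub lies in $A$ and whether $X_0=V(G)$. I expect only one genuinely tight case, namely when all hubs are deleted but some vertex of $G$ survives: there the isolated vertices of $G^*-X^*$ are exactly the surviving pendants together with the isolated vertices of $G-X_0$, and the required inequality reduces precisely to $q_G(V(G)\setminus X_0)-2|X_0|\le d$, which holds by the definition of $d$. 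Every other case is slack, since a surviving hub is adjacent to all of $V(G)$ and hence destroys all $G$-type isolated vertices, leaving only pendants, which are few. Combining this with the lower bound yields $pd(G)=d$, as claimed.
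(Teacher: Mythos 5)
Your proposal is correct and matches the paper's approach: the paper offers no written proof of this corollary, saying only that it follows from Theorem \ref{lvth} by the standard ``self-refining'' device by which Berge's formula is derived from Tutte's theorem (see \cite[Exercise 3.1.16]{LP}), and your lower-bound counting argument plus the hub-and-pendant augmentation is precisely that derivation carried out for path-matchings. (One microscopic slip in your slack cases: when $X_0=V(G)$, a surviving hub whose pendant is deleted is itself isolated, so the isolated vertices are not ``only pendants''; the required inequality there is still immediate since the isolated vertices number at most $|A|+|B|\le 2|X^*|$.)
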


We call a set $X$ achieving the maximum in Corollary \ref{lvthdef} an {\em LV set}.

%\begin{theorem}
%Let $G=(V,E)$ be a graph and suppose that the largest \lf{} in $G$ covers $m-1$ vertices, then there exists an independent set $S$ such that $|S|\geq 2|N(S)|+|V|-(m-1)$.
%\end{theorem}

\subsection{1-cores and \lfs{}}

Our main general result shows that the Ramsey numbers for \lfs{} are tied to the Ramsey numbers for 1-cores in a fundamental way.  First, given positive integers $r\geq 2$, $p_1\ge p_2\ge \dots \ge p_r\geq 2$, and $d$, let
\begin{equation}\label{f}
f^d(p_1, \dots, p_r)=\max\{\RC(p_1-dx_1, \dots, p_r-dx_r)+\sum_{i=1}^r x_i:  0\leq x_i< \frac{p_i}{d} \text{ for all } i\in [r]\},
\end{equation}
where the $x_i$'s are integers.

\begin{theorem}\label{mainthm}
Let $r\geq 2$ and let $p_1\ge p_2\ge \dots \ge p_r\geq 2$ be integers.  Then
$$\PM(p_1, \dots, p_r)=f^3(p_1, \dots, p_r).$$
\end{theorem}

For instance this result says $\PM(6,6,6,6,6)=\max\{\RC(6,6,6,6,6), \RC(6,6,6,6,3)+1, \RC(6,6,6,3,3)+2, \RC(6,6,3,3,3)+3, \RC(6,3,3,3,3)+4, \RC(3,3,3,3,3)+5\}$.

The following example provides the lower bound in Theorem \ref{mainthm}.

\begin{example}\label{mainexample}
Let $r, p_1, \dots, p_r, x_1, \dots, x_r$ be integers with $r\geq 2$, $p_1\ge p_2\ge \dots \ge p_r\geq 2$, and for all $i\in [r]$, $0\leq x_i< \frac{p_i}{3}$.  If $n=\RC(p_1-3x_1, \dots, p_r-3x_r)+\sum_{i=1}^rx_i$, then there exists an $r$-coloring of $K_{n-1}$ such that for all $i\in [r]$, the largest \lf{} of color $i$ has order at most $p_i-1$.
\end{example}

\begin{proof}
Set $t=\RC(p_1-3x_1, \dots, p_r-3x_r)$ and start with an $r$-coloring of $K_{t-1}$ such that for all $i\in [r]$, the largest 1-core of color $i$ has order at most $p_i-3x_i-1$ which must exist by the definition of $\RC(p_1-3x_1, \dots, p_r-3x_r)$.  For all $i\in [r]$, add a set $X_i$ of vertices such that $|X_i|=x_i$ and color all edges incident with $X_i$ with color $i$.  This gives a coloring of $K_{n-1}$ such that for all $i\in [r]$, the largest \lf{} of color $i$ has order at most $p_i-3x_i-1+3x_i=p_i-1$ (by Corollary \ref{lvthdef} for instance).
\end{proof}

The following proof is inspired by Petrov's \cite{P} non-inductive proof of Theorem \ref{colo} (for another similar proof see \cite{XYZ}).

\begin{proof}[Proof of Theorem \ref{mainthm}]
The lower bound follows from Example \ref{mainexample}.

For the upper bound, consider an $r$-edge coloring of a complete graph on a vertex set $V$ with $|V|=n=f^3(p_1, \dots, p_r)$.

For all $i\in [r]$, let $G_i$ the subgraph induced by the edges of color $i$.  Suppose, for contradiction, that for all $i\in [r]$ the largest \lf{} in $G_i$ has order at most $p_i-1$.  For all $i\in [r]$, apply Corollary \ref{lvthdef} to get an LV set $X_i$ and corresponding independent set $S_i$ such that
\begin{equation}
|S_i|\geq 2|X_i|+n-(p_i-1),
\end{equation}
and note that since $|S_i|\leq n-|X_i|$, we have $$0\leq |X_i|<\frac{p_i}{3}.$$  Let $X=\cup_{i=1}^r X_i$ and let $Y=V\setminus X$.  For all $i\in [r]$ we have $$|S_i\cap Y|\geq |S_i|-(|X|-|X_i|)\geq 2|X_i|+|V|-(p_i-1)-(|X|-|X_i|)=|Y|-(p_i-3|X_i|-1).$$  In other words, the largest 1-core of color $i$ in the graph $G_i[Y]$ has order at most $p_i-3|X_i|-1$.

We have by the definition of $n$,
\begin{align*}
|Y|=n-|X|\geq n-\sum_{i=1}^r|X_i|\geq \RC(p_1-3|X_1|, \dots, p_r-3|X_r|)
\end{align*}
so $Y$ contains a 1-core of color $i$ and order at least $p_i-3|X_i|$ for some $i\in [r]$, contradicting our original assumption.
\end{proof}

Now we obtain lower bounds on $\PM(p_1, \dots, p_r)$ as a corollary of Theorem \ref{mainthm}.

\begin{corollary}\label{corlower}
Let $r, p_1, \dots, p_r$ be integers with $r\geq 2$, $p_1\ge p_2\ge \dots \ge p_r\geq 2$ and  let $s$ be the number of $p_i$'s which are divisible by 3.
Then the following hold:
\begin{enumerate}
\item $\PM(p_1, \dots, p_r)\geq p_1-(r-1)+\sum_{i=2}^r\ceiling{\frac{p_i}{3}}$,

\item $\PM(p_1, \dots, p_r)\geq \RC_s(3)+\sum_{i=1}^r(\ceiling{\frac{p_i}{3}}-1)=\floor{\frac{\sqrt{8s+1}+1}{2}}+1+\sum_{i=1}^r(\ceiling{\frac{p_i}{3}}-1)$.
\end{enumerate}
\end{corollary}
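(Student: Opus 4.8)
The plan is to derive both inequalities directly from Theorem \ref{mainthm}, which identifies $\PM(p_1,\dots,p_r)$ with the maximum $f^3(p_1,\dots,p_r)$ defined in \eqref{f}. Since $f^3$ is a maximum over admissible integer vectors $(x_1,\dots,x_r)$ with $0\le x_i<\frac{p_i}{3}$, any single valid choice of the $x_i$ yields a lower bound $\PM(p_1,\dots,p_r)\ge \RC(p_1-3x_1,\dots,p_r-3x_r)+\sum_{i=1}^r x_i$. The whole game is to select the $x_i$ cleverly and then estimate the surviving $\RC$-term.

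For part (i), I would set $x_1=0$ and $x_i=\ceiling{\frac{p_i}{3}}-1$ for $i\ge 2$. A quick check shows $0\le x_i<\frac{p_i}{3}$ in each case, and examining the residue of $p_i$ modulo $3$ gives $p_i-3x_i=p_i-3\ceiling{\frac{p_i}{3}}+3\in\{1,2,3\}$ for $i\ge 2$, while the first coordinate stays $p_1$. Thus it remains to bound $\RC(p_1,p_2-3x_2,\dots,p_r-3x_r)$ from below. Here I would simply use the trivial coloring: coloring all of $K_{p_1-1}$ with color $1$ leaves every other color with an empty $1$-core, so no color attains a $1$-core of order $\ge$ its threshold; hence $\RC(p_1,\dots)\ge p_1$. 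Combining with $\sum_{i=2}^r x_i=\sum_{i=2}^r\ceiling{\frac{p_i}{3}}-(r-1)$ gives exactly the claimed bound.

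For part (ii), I would instead reduce every coordinate, taking $x_i=\ceiling{\frac{p_i}{3}}-1$ for all $i\in[r]$ (again one verifies $0\le x_i<\frac{p_i}{3}$). Now $p_i-3x_i=3$ precisely when $3\mid p_i$ and lies in $\{1,2\}$ otherwise, so the reduced list consists of exactly $s$ copies of $3$ together with $r-s$ entries that are at most $2$. Invoking the $\RC$-version of Observation \ref{p_i=2} repeatedly to discard the entries equal to $1$ and $2$ reduces the Ramsey number to $\RC_s(3)$, and Proposition \ref{all3} evaluates this as $\floor{\frac{\sqrt{8s+1}+1}{2}}+1$. Together with $\sum_{i=1}^r x_i=\sum_{i=1}^r\left(\ceiling{\frac{p_i}{3}}-1\right)$ this yields the stated inequality.

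The argument is short, and the only points requiring care are bookkeeping rather than genuine obstacles: confirming the strict inequality $\ceiling{\frac{p_i}{3}}-1<\frac{p_i}{3}$ and the residue computation for $p_i-3x_i$, and making sure the pruning via Observation \ref{p_i=2} is applied correctly (including the degenerate cases $s\in\{0,1\}$, where the closed form $\floor{\frac{\sqrt{8s+1}+1}{2}}+1$ still returns the correct small values $2$ and $3$). The genuinely creative step---choosing which coordinates to shrink---is what distinguishes the two bounds: part (i) preserves the single largest threshold $p_1$ while collapsing the rest, whereas part (ii) collapses everything and extracts its strength from the $\RC_s(3)$ term governed by the covering-design behavior captured in Proposition \ref{all3}.
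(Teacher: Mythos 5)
Your proof is correct and takes essentially the same route as the paper: both parts instantiate the maximum in Theorem \ref{mainthm} with exactly the same choices of $x_i$ (keeping $x_1=0$ and collapsing the rest in part (i), collapsing every coordinate in part (ii)), and then apply the bound $\RC(a_1,\dots,a_r)\geq a_1$ and, respectively, the $\RC$-version of Observation \ref{p_i=2} together with Proposition \ref{all3}. The only difference is that you spell out bookkeeping the paper leaves implicit (the residue computation for $p_i-3x_i$ and the one-color extremal coloring justifying $\RC(a_1,\dots,a_r)\geq a_1$), which is fine.
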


Note that under certain circumstances, such as all $p_i$ being divisible by 3 and $p_1<\frac{3}{2}\floor{\frac{\sqrt{8r+1}+1}{2}}$, we have $$\floor{\frac{\sqrt{8r+1}+1}{2}}+1+\sum_{i=1}^r(\ceiling{\frac{p_i}{3}}-1)>p_1-(r-1)+\sum_{i=2}^r\ceiling{\frac{p_i}{3}}.$$
For instance, we have $\PM_{10}(6)>6-(10-1)+(10-1)\cdot 2=15$.

\begin{proof}
(i) This follows from the fact that
\begin{align*}
&\max\{\RC(p_1-3x_1, \dots, p_r-3x_r)+\sum_{i=1}^r x_i:  0\leq x_i< \frac{p_i}{3} \text{ for all } i\in [r]\}\\
&\geq \RC(p_1, p_2-3(\ceiling{\frac{p_2}{3}}-1), \dots, p_r-3(\ceiling{\frac{p_r}{3}}-1))+\sum_{i=2}^r(\ceiling{\frac{p_i}{3}}-1)\geq p_1+\sum_{i=2}^r(\ceiling{\frac{p_i}{3}}-1),
\end{align*}
where the last inequality holds since in general we have $\RC(a_1, \dots, a_r)\geq a_1$ for all $a_1\geq \dots\geq a_r$.

(ii) This follows from the fact that
\begin{align*}
&\max\{\RC(p_1-3x_1, \dots, p_r-3x_r)+\sum_{i=1}^r x_i:  0\leq x_i< \frac{p_i}{3} \text{ for all } i\in [r]\}\\
&\geq \RC(p_1-3(\ceiling{\frac{p_1}{3}}-1), p_2-3(\ceiling{\frac{p_2}{3}}-1), \dots, p_r-3(\ceiling{\frac{p_r}{3}}-1))+\sum_{i=1}^r(\ceiling{\frac{p_i}{3}}-1)\\
&=\RC_s(3)+\sum_{i=1}^r(\ceiling{\frac{p_i}{3}}-1)= \floor{\frac{\sqrt{8s+1}+1}{2}}+1+\sum_{i=1}^r(\ceiling{\frac{p_i}{3}}-1)
\end{align*}
where the last equality holds by Proposition \ref{all3}.
\end{proof}

Now we show how Theorem \ref{mainthm} implies the result for two colors (which follows from the result of Faudree and Schelp \cite{FS}). However, the proof of their result is lengthy and relies on the $2$-color Ramsey number of paths determined in \cite{GGY}.  Therefore the short proof below is perhaps of some interest.

Note that since a graph or its complement is connected, we clearly have
\begin{equation}\label{rc2}
\RC(p_1, p_2)=\max\{p_1, p_2\}.
\end{equation}

\begin{corollary}\label{r=2}
Let $p_1\ge p_2\geq 2$ be integers.  Then
$$\PM(p_1, p_2)=p_1+\ceiling{\frac{p_2}{3}}-1.$$
\end{corollary}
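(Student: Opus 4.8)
The plan is to derive Corollary \ref{r=2} directly from Theorem \ref{mainthm} by specializing to $r=2$ and using the explicit formula \eqref{rc2} for $\RC(p_1,p_2)$. By Theorem \ref{mainthm}, we have
\[
\PM(p_1,p_2)=f^3(p_1,p_2)=\max\Big\{\RC(p_1-3x_1,\,p_2-3x_2)+x_1+x_2:\ 0\le x_1<\tfrac{p_1}{3},\ 0\le x_2<\tfrac{p_2}{3}\Big\},
\]
so the entire task reduces to maximizing the function $(x_1,x_2)\mapsto \max\{p_1-3x_1,\,p_2-3x_2\}+x_1+x_2$ over the allowed integer range, where I have substituted \eqref{rc2}.

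First I would argue that it is never advantageous to take $x_1>0$. Intuitively, increasing $x_1$ decreases $p_1-3x_1$ by $3$ while increasing the additive term by only $1$; since $p_1\ge p_2$, the maximum of the two arguments is governed by the first coordinate unless $p_2-3x_2$ overtakes it, and one checks that pushing $x_2$ up is the only way to gain. More precisely, I would show that the optimum is attained at $x_1=0$ and $x_2=\lceil p_2/3\rceil-1$, the largest feasible value of $x_2$. At that point $p_2-3x_2=p_2-3(\lceil p_2/3\rceil-1)\in\{1,2,3\}$, so it is at most $3\le p_1$ (using the hypothesis $p_1\ge p_2\ge 2$ and the reductions of Observation \ref{p_i=2}, which let us assume $p_1\ge 3$), whence $\RC(p_1,p_2-3x_2)=\max\{p_1,p_2-3x_2\}=p_1$. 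This yields $\PM(p_1,p_2)\ge p_1+(\lceil p_2/3\rceil-1)=p_1+\lceil p_2/3\rceil-1$, which already matches the claimed value and is exactly the lower bound recorded in Corollary \ref{corlower}(i).

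For the matching upper bound I would verify that no other feasible pair $(x_1,x_2)$ beats this. I would split into two cases according to which term realizes the outer maximum in $\RC(p_1-3x_1,p_2-3x_2)=\max\{p_1-3x_1,\,p_2-3x_2\}$. If the maximum is $p_1-3x_1$, the objective is $p_1-3x_1+x_1+x_2=p_1-2x_1+x_2$, and since $x_2\le\lceil p_2/3\rceil-1$ and $x_1\ge 0$ this is at most $p_1+\lceil p_2/3\rceil-1$. If instead the maximum is $p_2-3x_2$, the objective is $p_2-2x_2+x_1$; here I would use $x_1<p_1/3$ together with $p_2\le p_1$ to bound this quantity, observing that it is decreasing in $x_2$ and that the constraint $p_2-3x_2\ge p_1-3x_1$ forces $x_1$ to be large enough that the total never exceeds $p_1+\lceil p_2/3\rceil-1$. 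Combining both cases gives $f^3(p_1,p_2)\le p_1+\lceil p_2/3\rceil-1$, completing the proof.

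The main obstacle is the bookkeeping in the second case of the upper bound, where both $x_1$ and $x_2$ are positive and the $\RC$ value is dictated by the second coordinate. The inequality there is tight and relies on the floor/ceiling arithmetic of $\lceil p_2/3\rceil$ interacting with the residue of $p_2$ modulo $3$, so one must handle the three residue classes carefully rather than treating $p_2/3$ as a real number. I expect everything else to be a routine optimization of a piecewise-linear function of two bounded integer variables.
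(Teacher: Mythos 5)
Your proposal is correct, and it follows the same overall strategy as the paper's proof: apply Theorem \ref{mainthm}, substitute \eqref{rc2}, and maximize $\max\{p_1-3x_1,\,p_2-3x_2\}+x_1+x_2$ over the allowed integer box. Where it differs is in how that maximization is carried out, and here your version is actually the more careful one. The paper disposes of it with the single pointwise chain
\[
\RC(p_1-3x_1, p_2-3x_2)+x_1+x_2\leq \max\{p_1-3x_1, p_2-3x_2\}+x_1+x_2\leq p_1+x_2\leq p_1+\ceiling{\frac{p_2}{3}}-1,
\]
but the middle step $\max\{p_1-3x_1, p_2-3x_2\}+x_1+x_2\leq p_1+x_2$ is false in general: for $p_1=p_2=6$, $x_1=1$, $x_2=0$ the left side is $\max\{3,6\}+1=7$ while $p_1+x_2=6$. (The final bound $p_1+\ceiling{\frac{p_2}{3}}-1=7$ still holds, so the corollary itself is unaffected, but the displayed step fails exactly in your Case 2, when $p_2-3x_2$ realizes the maximum.) Your two-case split is the right way to close this: in Case 2 the objective satisfies $p_2-2x_2+x_1\le p_2+x_1\le p_2+\ceiling{\frac{p_1}{3}}-1\le p_1+\ceiling{\frac{p_2}{3}}-1$, where the last inequality holds because $p-\ceiling{\frac{p}{3}}=\floor{\frac{2p}{3}}$ is nondecreasing in $p$ and $p_2\le p_1$; this also settles, in one line, the residue-class bookkeeping you flag as the main obstacle. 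One small correction to your sketch: the constraint $p_2-3x_2\ge p_1-3x_1$ does not ``force $x_1$ to be large enough'' to help you (a larger $x_1$ only increases the objective); it is simply not needed, since the bound above uses only $x_2\ge 0$, $x_1\le\ceiling{\frac{p_1}{3}}-1$, and $p_2\le p_1$. Finally, your lower bound via $x_1=0$, $x_2=\ceiling{\frac{p_2}{3}}-1$ is exactly Corollary \ref{corlower}(i) (equivalently the extremal coloring $[p_1-1,\ceiling{\frac{p_2}{3}}-1]$), which the paper invokes implicitly, so citing it suffices.
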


\begin{proof}
By \eqref{rc2}, we have for all $0\leq x_1< \frac{p_1}{3}, 0\leq x_2< \frac{p_2}{3}$,
\[
\RC(p_1-3x_1, p_2-3x_2)+x_1+x_2\leq \max\{p_1-3x_1, p_2-3x_2\}+x_1+x_2\leq p_1+x_2\leq p_1+\ceiling{\frac{p_2}{3}}-1.
\]
Thus by Theorem \ref{mainthm} we have $\PM(p_1, p_2)=f^3(p_1, p_2)\leq p_1+\ceiling{\frac{p_2}{3}}-1$.
\end{proof}

\subsection{The proofs of Theorem \ref{main2} and Theorem \ref{main}}

We obtain the proofs by combining using Theorem \ref{1-core} to get an appropriate upper bound on $f^3(p_1, \dots, p_r)$ at which point the result follows from Theorem \ref{mainthm}. So the proof just reduces to checking some technical inequalities which we first collect here.

\begin{fact}\label{techfact}
Let $r\geq 3$ and let $a_1\ge a_2\ge \dots \ge a_r\geq 2$ be integers with $a_1\geq 3$.
\begin{enumerate}
%\item[(i)] $\ceiling{p_1 - \frac{r}{3} + \sum_{i=2}^{r} \frac{p_i}{3}}\geq p_1-(r-1) + \sum_{i=2}^{r}\ceiling{\frac{p_i}{3}}$
\item $\ceiling{\frac{2a_1}{3}-\frac{r}{3}+\sum_{i=1}^r\frac{a_i}{3}}\geq \ceiling{\frac{a_1+a_2+a_3}{2}}-1$

\item $a_1- (r-1) +  \sum_{i=2}^{r}\ceiling{\frac{a_i}{3}}\geq \ceiling{\frac{a_1}{3}-\frac{r}{3} + \sum_{i=1}^{r}\frac{a_i}{3}}$ if and only if  $a_1\geq 2r-3-\sum_{i=2}^r 3\left(\ceiling{\frac{a_i}{3}}-\frac{a_i}{3} \right).$

\item $a_1- (r-1) +  \sum_{i=2}^{r}\ceiling{\frac{a_i}{3}}\geq \ceiling{\frac{a_1+a_2+a_3}{2}}-1+  \sum_{i=4}^{r}(\ceiling{\frac{a_i}{3}}-1)$ if and only if $a_1\geq 2+\left(a_2-2\ceiling{\frac{a_2}{3}}\right)+\left(a_3-2\ceiling{\frac{a_3}{3}}\right).$  In particular, this holds when $a_1\geq 4$.
\end{enumerate}
\end{fact}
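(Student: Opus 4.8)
The statement to prove is Fact~\ref{techfact}, which consists of three inequalities (with iff conditions for the latter two) involving ceilings of thirds and halves. The plan is to handle each part by isolating the fractional/ceiling contributions and reducing to an integer inequality that can be checked directly.

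First I would dispose of part (i). The cleanest route is to compare the two sides after clearing the outer ceiling on the left: since $\ceiling{z}\geq z$, it suffices to show $\frac{2a_1}{3}-\frac{r}{3}+\sum_{i=1}^r\frac{a_i}{3}\geq \ceiling{\frac{a_1+a_2+a_3}{2}}-1$, but that loses too much when the right side is large, so instead I would argue more carefully. Writing $\frac{2a_1}{3}-\frac{r}{3}+\sum_{i=1}^r\frac{a_i}{3}=a_1+\frac{a_2+a_3}{3}+\sum_{i=4}^r\frac{a_i-1}{3}$, and noting $\sum_{i=4}^r\frac{a_i-1}{3}\geq 0$ since each $a_i\geq 2>1$, I reduce to comparing $a_1+\frac{a_2+a_3}{3}$ against $\frac{a_1+a_2+a_3}{2}-1$. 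This is equivalent to $\frac{a_1}{2}+1\geq \frac{a_2+a_3}{6}$, which follows from $a_1\geq a_2\geq a_3$ (so $\frac{a_2+a_3}{6}\leq\frac{a_1}{3}\leq\frac{a_1}{2}+1$). Because the left-hand side is an integer, I can then promote the inequality through the ceiling, taking care with the $+1$ slack to absorb the ceiling on the right; the integrality of the outer left ceiling is what makes this go through cleanly.

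For parts (ii) and (iii), both are iff statements, so the strategy is to rearrange each into a form where the ceiling terms cancel against explicit $\frac{a_i}{3}$ or $\frac{a_i}{2}$ contributions, leaving a linear inequality in $a_1$ and the fractional defects. For part (ii), I would move the outer ceiling: since $a_1-(r-1)+\sum_{i=2}^r\ceiling{\frac{a_i}{3}}$ is an integer, the inequality $\text{LHS}\geq \ceiling{\frac{a_1}{3}-\frac{r}{3}+\sum_{i=1}^r\frac{a_i}{3}}$ holds iff $\text{LHS}\geq \frac{a_1}{3}-\frac{r}{3}+\sum_{i=1}^r\frac{a_i}{3}$ (an integer is $\geq\ceiling{x}$ iff it is $\geq x$). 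Substituting and multiplying by $3$, the $\ceiling{\frac{a_i}{3}}$ terms combine with the $-3\cdot\frac{a_i}{3}$ terms to produce exactly the defect terms $3(\ceiling{\frac{a_i}{3}}-\frac{a_i}{3})$, and after cancellation I expect to land precisely on the stated condition $a_1\geq 2r-3-\sum_{i=2}^r 3(\ceiling{\frac{a_i}{3}}-\frac{a_i}{3})$. This part is essentially bookkeeping once the ``integer vs.\ ceiling'' reduction is in place.

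Part (iii) is the most delicate and I expect it to be the main obstacle, since both sides carry ceilings and the right-hand side mixes a half-ceiling of $a_1+a_2+a_3$ with third-ceilings of the remaining $a_i$. My plan is to subtract $\sum_{i=4}^r(\ceiling{\frac{a_i}{3}}-1)$ from both sides, which cancels the tail and reduces the claim to a pure three-variable statement: $a_1-(r-1)+\ceiling{\frac{a_2}{3}}+\ceiling{\frac{a_3}{3}}+\sum_{i=4}^r(\ceiling{\frac{a_i}{3}}-(\ceiling{\frac{a_i}{3}}-1))\geq\ceiling{\frac{a_1+a_2+a_3}{2}}-1$, i.e.\ after simplifying the telescoping, $a_1-2+\ceiling{\frac{a_2}{3}}+\ceiling{\frac{a_3}{3}}\geq\ceiling{\frac{a_1+a_2+a_3}{2}}-1$. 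I would then rewrite $\ceiling{\frac{a_2}{3}}=\frac{a_2-(a_2-3\ceiling{a_2/3})\cdot(-1)}{3}$ carefully—more simply, use $3\ceiling{\frac{a_j}{3}}=a_j+(3\ceiling{\frac{a_j}{3}}-a_j)$ and $2\ceiling{\frac{a_1+a_2+a_3}{2}}=a_1+a_2+a_3+\delta$ with $\delta\in\{0,1\}$—to convert everything to a linear inequality in $a_1,a_2,a_3$ plus bounded correction terms. The defects $a_j-2\ceiling{\frac{a_j}{3}}$ appearing in the target condition suggest isolating exactly those quantities; I expect the iff to fall out after expressing both ceilings and grouping. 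The ``in particular $a_1\geq 4$'' remark I would verify by bounding $a_j-2\ceiling{\frac{a_j}{3}}\leq a_j-\frac{2a_j}{3}=\frac{a_j}{3}$ is too weak, so instead I would use the sharper $a_j-2\ceiling{\frac{a_j}{3}}\leq \frac{a_j}{3}$ only when needed and check the worst case $a_2,a_3\equiv 0\pmod 3$ directly, where $a_j-2\ceiling{\frac{a_j}{3}}=\frac{a_j}{3}$; tracking the residues mod $3$ and mod $2$ simultaneously is the fiddly part, and I would organize it by a short case analysis on $(a_2\bmod 3, a_3\bmod 3, (a_1+a_2+a_3)\bmod 2)$ to confirm the equivalence holds in every case.
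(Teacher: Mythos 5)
Your plan follows the paper's proof quite closely, but there is an algebra slip in part (i): the identity you display is off by one. In fact
\[
\frac{2a_1}{3}-\frac{r}{3}+\sum_{i=1}^r\frac{a_i}{3}
\;=\; a_1+\frac{a_2+a_3}{3}-1+\sum_{i=4}^r\frac{a_i-1}{3},
\]
not $a_1+\frac{a_2+a_3}{3}+\sum_{i=4}^r\frac{a_i-1}{3}$: allocating $-\frac{1}{3}$ of the $-\frac{r}{3}$ to each index, index $1$ contributes $a_1-\frac{1}{3}$ and indices $2,3$ contribute $\frac{a_2-1}{3}+\frac{a_3-1}{3}$, so a $-1$ is missing. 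As written, the subsequent comparison therefore proves a statement about a quantity that is one larger than the actual left-hand side. The slip is easily repaired: the corrected comparison $a_1+\frac{a_2+a_3}{3}-1\geq\frac{a_1+a_2+a_3}{2}-1$ is equivalent to $3a_1\geq a_2+a_3$, which holds by the ordering alone, and monotonicity of the ceiling (using $\ceiling{x-1}=\ceiling{x}-1$) then yields the claim. (The paper's chain instead passes through $\frac{a_1+a_2+a_3-1}{2}$, which is where the hypothesis $a_1\geq 3$ gets used; your route, once corrected, does not even need it.) Part (ii) of your plan is exactly the paper's proof.

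In part (iii), your reduction --- cancelling $\sum_{i=4}^r\left(\ceiling{\frac{a_i}{3}}-1\right)$ to reach $a_1+\ceiling{\frac{a_2}{3}}+\ceiling{\frac{a_3}{3}}\geq\ceiling{\frac{a_1+a_2+a_3}{2}}+1$ --- coincides with the paper's first step, but the mod-$2$/mod-$3$ case analysis you then propose is unnecessary: the very trick you used in (ii) finishes the job. The left side is an integer $n$, and $n\geq\ceiling{x}+1$ iff $n-1\geq x$ iff $n\geq x+1$; removing the ceiling this way and rearranging lands exactly on the stated condition $a_1\geq 2+\left(a_2-2\ceiling{\frac{a_2}{3}}\right)+\left(a_3-2\ceiling{\frac{a_3}{3}}\right)$, with no cases at all. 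Your case analysis can be made to work (one checks that the relevant gap is a multiple of $3$ whose vanishing forces the even-parity case), but it is strictly more laborious. Finally, your verification of the ``in particular $a_1\geq 4$'' clause --- the bound $a_j-2\ceiling{\frac{a_j}{3}}\leq\frac{a_j}{3}$ only settles $a_1\geq 6$, so check $a_1\in\{4,5\}$ directly, where each defect is at most $1$ and hence the right side is at most $4$ --- is sound, and in fact more careful than the paper's one-line appeal to $a_1\geq a_2\geq a_3$, since the function $a\mapsto a-2\ceiling{\frac{a}{3}}$ is not monotone (it equals $1$ at $a=3$ but $0$ at $a=4$).
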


\begin{proof}
(i) First note that $\ceiling{\frac{2a_1}{3}-\frac{r}{3}+\sum_{i=1}^r\frac{a_i}{3}}\geq a_1+\frac{a_2}{3}+\frac{a_3}{3}-1$ and since $a_1\geq 3$, we have $a_1+\frac{a_2}{3}+\frac{a_3}{3}-1\geq \frac{a_1+a_2+a_3-1}{2}\geq \ceiling{\frac{a_1+a_2+a_3}{2}}-1$

(ii) Since $a_1- (r-1) +  \sum_{i=2}^{r}\ceiling{\frac{a_i}{3}}$ is an integer, $a_1- (r-1) +  \sum_{i=2}^{r}\ceiling{\frac{a_i}{3}}\geq \ceiling{\frac{a_1}{3}-\frac{r}{3} + \sum_{i=1}^{r}\frac{a_i}{3}}$ is equivalent to $a_1-(r-1) + \sum_{i=2}^{r}\ceiling{\frac{a_i}{3}}\geq \frac{a_1}{3}-\frac{r}{3} + \sum_{i=1}^{r}\frac{a_i}{3}$ which holds precisely when $a_1\geq  2r-3-\sum_{i=2}^r 3(\ceiling{\frac{a_i}{3}}-\frac{a_i}{3})$.

(iii) Since $a_1- (r-1) +  \sum_{i=2}^{r}\ceiling{\frac{a_i}{3}}$ is an integer, $a_1- (r-1) +  \sum_{i=2}^{r}\ceiling{\frac{a_i}{r}}\geq \ceiling{\frac{a_1+a_2+a_3}{2}}-1+  \sum_{i=4}^{r}(\ceiling{\frac{a_i}{3}}-1)$ is equivalent to $a_1+\ceiling{\frac{a_2}{3}}+\ceiling{\frac{a_2}{3}}\geq \frac{a_1+a_2+a_3}{2}+1$ which holds precisely when $a_1\geq 2+\left(a_2-2\ceiling{\frac{a_2}{3}}\right)+\left(a_3-2\ceiling{\frac{a_3}{3}}\right)$.

Note that if $a_1\geq 4$, we have $a_1\geq 2+2\left(a_1-2\ceiling{\frac{a_1}{3}}\right)\geq 2+\left(a_2-2\ceiling{\frac{a_2}{3}}\right)+\left(a_3-2\ceiling{\frac{a_3}{3}}\right)$ where the last inequality holds since $a_1\geq a_2\geq a_3$.
\end{proof}

\begin{proof}[Proof of Theorem \ref{main2} and Theorem \ref{main}]
Let $r\geq 3$ and let $p_1\geq \dots \geq p_r\geq 2$ (note that we already dealt with the case when $r=2$ in Corollary \ref{r=2}).  Let $x_1, \dots, x_r$ be integers with $0\leq x_i<\frac{p_i}{3}$ for all $i\in [r]$.  We have $p_1\geq \dots\geq p_r$, but it might not be the case that $p_1-3x_1\geq \dots\geq p_r-3x_r$, so we let $q_1\geq \dots \geq q_r$ be integers and we let $\pi$ be a permutation on $[r]$ such that $p_{\pi(i)}-3x_{\pi(i)}=q_{i}$ for all $i\in [r]$.

To prove Theorem \ref{main}, we note that by Theorem \ref{1-core} we have
\begin{align*}
\RC(p_1-3x_1, \dots, p_r-3x_r)&\leq \max\left\{q_1,\ \ceiling{\frac{q_1+q_2+q_3}{2}}-1,\ \ceiling{\frac{q_1}{3}-\frac{r}{3} + \sum_{i=1}^{r}\frac{q_i}{3}}\right\}\\
&\leq  \ceiling{\frac{2q_1}{3}-\frac{r}{3}+\sum_{i=1}^r\frac{q_i}{3}}=\ceiling{\frac{2p_{\pi(1)}}{3}-2x_{\pi(1)}-\frac{r}{3}+\sum_{i=1}^r(\frac{p_i}{3}-x_i)}.
\end{align*}
where the second inequality is by applying Fact \ref{techfact}.(i) (with $a_i=q_i$) to the second term in the maximum (the inequality is trivial for the other two terms in the maximum).

So we have
\[
\RC(p_1-3x_1, \dots, p_r-3x_r)+\sum_{i=1}^rx_i\leq \ceiling{\frac{2p_{\pi(1)}}{3}-2x_{\pi(1)}-\frac{r}{3}+\sum_{i=1}^r\frac{p_i}{3}}\leq \ceiling{ p_1-\frac{r}{3} + \sum_{i=2}^{r} \frac{p_i}{3}},
\]
and thus by Theorem \ref{mainthm},  $\PM(p_1, \dots, p_r)=f^3(p_1, \dots, p_r)\leq \ceiling{ p_1-\frac{r}{3} + \sum_{i=2}^{r} \frac{p_i}{3}}$ as desired.

Next we prove Theorem \ref{main2}. The lower bound follows from Corollary \ref{corlower}.(i).
For the upper bound, we set $m:=\max\left\{q_1,\ \ceiling{\frac{q_1+q_2+q_3}{2}}-1,\ \ceiling{\frac{q_1}{3}-\frac{r}{3} + \sum_{i=1}^{r}\frac{q_i}{3}}\right\}$ and note that by Theorem \ref{1-core}, we have
\begin{align*}
&\RC(p_1-3x_1, \dots, p_r-3x_r)+\sum_{i=1}^rx_i\leq m+\sum_{i=1}^rx_i\\
&\stackrel{*}{\leq} \max\left\{p_{1}-(r-1)+\sum_{i=2}^r\ceiling{\frac{p_i}{3}},\ \ceiling{\frac{p_1+p_2+p_3}{2}}-1+\sum_{i=4}^{r}(\ceiling{\frac{p_i}{3}}-1),\ \ceiling{\frac{p_1}{3}-\frac{r}{3} + \sum_{i=1}^{r}\frac{p_i}{3}}\right\}\\
&\leq p_{1}-(r-1)+\sum_{i=2}^r\ceiling{\frac{p_i}{3}},
\end{align*}
where the last inequality follows by Fact \ref{techfact}.(ii) and Fact \ref{techfact}.(iii) (with $a_i=p_i$).  It remains to justify the inequality $\stackrel{*}{\leq}$ above.

\textbf{Case 1} ($m = q_1$)  Set $J=[r]\setminus \{\pi(1)\}$.  We have
\begin{align*}
\RC(p_1-3x_1, \dots, p_r-3x_r)+\sum_{i=1}^rx_i
\leq p_{\pi(1)}-2x_{\pi(1)}+\sum_{j\in J}x_j
&\leq p_{\pi(1)}-2x_{\pi(1)}+\sum_{j\in J}(\ceiling{\frac{p_j}{3}}-1)\\
&\leq p_{1}-(r-1)+\sum_{i=2}^r\ceiling{\frac{p_i}{3}}.
\end{align*}

\textbf{Case 2} ($m = \ceiling{\frac{q_1+q_2+q_3}{2}}-1$)  Set $J=[r]\setminus \{\pi(1), \pi(2), \pi(3)\}$.  We have
\begin{align*}
\RC(p_1-3x_1, \dots, p_r-3x_r)+\sum_{i=1}^rx_i
&\leq \ceiling{\frac{p_{\pi(1)}-x_{\pi(1)}+p_{\pi(2)}-x_{\pi(2)}+p_{\pi(3)}-x_{\pi(3)}}{2}}-1+\sum_{j\in J}x_j\\
&\leq \ceiling{\frac{p_{\pi(1)}+p_{\pi(2)}+p_{\pi(3)}}{2}}-1+\sum_{j\in J}(\ceiling{\frac{p_j}{3}}-1)\\
&\leq \ceiling{\frac{p_{1}+p_{2}+p_{3}}{2}}-1+\sum_{i=4}^r(\ceiling{\frac{p_i}{3}}-1).
\end{align*}

\textbf{Case 3} ($m=\ceiling{\frac{q_1}{3}-\frac{r}{3} + \sum_{i=1}^{r}\frac{q_i}{3}}$)  We have
\begin{align*}
\RC(p_1-3x_1, \dots, p_r-3x_r)+\sum_{i=1}^rx_i
\leq \ceiling{\frac{p_{\pi(1)}}{3}-x_{\pi(1)}-\frac{r}{3}+\sum_{i=1}^r\frac{p_i}{3}}\leq \ceiling{\frac{p_{1}}{3}-\frac{r}{3}+\sum_{i=1}^r\frac{p_i}{3}}.
\end{align*}
\end{proof}

\subsection{Small values of $p_i$ and small number of colors}

We now determine the Ramsey number of \lfs{} for at most 4 colors.

\begin{corollary}\label{r234}
~
\begin{enumerate}
\item[(i)] For all integers $p_1\geq p_2\geq 2$, $\PM(p_1,p_2)=p_1+\ceiling{\frac{p_2}{3}}-1$.

\item[(ii)] For all integers $p_1\geq p_2\geq p_3\geq 2$ such that $(p_1, p_2, p_3)\neq (3,3,3)$, $\PM(p_1, p_2, p_3)= p_1+\ceiling{\frac{p_2}{3}}+\ceiling{\frac{p_3}{3}}-2$.  Furthermore $\PM(3,3,3)=4$.

\item[(iii)] For all integers $p_1\geq p_2\geq p_3\geq p_4\geq 2$ such that $(p_1, p_2, p_3, p_4)\not\in\{(3,3,3,3), (4,3,3,3)\}$, $\PM(p_1, p_2, p_3, p_4)= p_1+\ceiling{\frac{p_2}{3}}+\ceiling{\frac{p_3}{3}}+\ceiling{\frac{p_4}{3}}-3$.  Furthermore $\PM(3,3,3,3)=4$ and $\PM(4,3,3,3)=5$.
\end{enumerate}
\end{corollary}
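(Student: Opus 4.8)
The plan is to reduce everything to results already in place. Part (i) is exactly Corollary \ref{r=2}, so all the content is in (ii) and (iii). In each I would first use Observation \ref{p_i=2} to delete every coordinate equal to $2$: this lowers the number of colors, but since $\ceiling{2/3}-1=0$ it leaves the claimed formula unchanged, so it suffices to treat tuples with all entries at least $3$ while remembering which original tuple each came from. For the generic tuples I would then quote Theorem \ref{main2}, whose conclusion is precisely the asserted formula; the only thing to check is its hypothesis $p_1\geq 4$ together with $p_1\geq 2r-3-\sum_{i=2}^r 3\left(\ceiling{p_i/3}-p_i/3\right)$, for $r\in\{3,4\}$.

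Verifying the hypothesis is a short residue computation. For $r=3$ it reads $p_1\geq 3-\sum_{i=2}^3 3\left(\ceiling{p_i/3}-p_i/3\right)$, and since the subtracted sum is nonnegative it is implied by $p_1\geq 4$; so Theorem \ref{main2} settles every three-color tuple with $p_1\geq 4$, leaving only $(3,3,3)$, $(3,3,2)$, $(3,2,2)$. For $r=4$ it reads $p_1\geq 5-\sum_{i=2}^4 3\left(\ceiling{p_i/3}-p_i/3\right)$; with $p_1\geq 4$ this can fail only when $p_1=4$ and the subtracted sum vanishes, and the latter forces $p_2,p_3,p_4$ all divisible by $3$, hence (being at most $4$) equal to $3$. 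Thus the hypothesis fails precisely at $(4,3,3,3)$, so Theorem \ref{main2} settles every four-color tuple with $p_1\geq 4$ except $(4,3,3,3)$, leaving $(4,3,3,3)$ and the four tuples with $p_1=3$.

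What remains is a finite small-case analysis, which I expect to be the main obstacle. The leftover tuples that still contain a $2$ collapse under Observation \ref{p_i=2} to two-color tuples handled by part (i): $(3,3,2)$ and $(3,3,2,2)$ give $\PM(3,3)=3$, while $(3,2,2)$ and $(3,2,2,2)$ give $\PM(3,2)=3$, each matching the formula. The all-$3$ tuples are exceptions evaluated by Proposition \ref{all3}, giving $\PM(3,3,3)=\PM_3(3)=4$ and $\PM(3,3,3,3)=\PM_4(3)=4$. For $(4,3,3,3)$ I would use the identity $\PM(4,3,3,3)=f^3(4,3,3,3)$ of Theorem \ref{mainthm}: the admissible shifts force $x_2=x_3=x_4=0$ and $x_1\in\{0,1\}$; taking $x_1=1$ gives $\RC(1,3,3,3)+1=\RC(3,3,3)+1=5$ by Observation \ref{p_i=2} and Proposition \ref{all3}, while $x_1=0$ gives $\RC(4,3,3,3)\leq 5$ by Theorem \ref{1-core}, so $\PM(4,3,3,3)=5$ (the matching lower bound is also immediate from Corollary \ref{corlower}.(ii) with $s=3$).

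The point demanding the most care is the interaction between the reduction and the all-$3$ exceptions. In (ii) every $p_1=3$ tuple other than $(3,3,3)$ reduces to a regular two-color case, so the exception list $\{(3,3,3)\}$ is complete. In (iii), however, the tuple $(3,3,3,2)$ reduces not to a regular case but to the \emph{exceptional} $(3,3,3)$, whence $\PM(3,3,3,2)=\PM(3,3,3)=4$ rather than the $3$ given by the formula; so the careful bookkeeping shows that $(3,3,3,2)$ must be recorded as a third exceptional four-color tuple alongside $(3,3,3,3)$ and $(4,3,3,3)$. Confirming that no further tuple escapes either Theorem \ref{main2} or the explicit evaluations above is the part of the argument I would check most carefully.
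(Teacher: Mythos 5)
Your proposal is correct and, in outline, follows the same route as the paper's proof: strip entries equal to $2$ via Observation \ref{p_i=2}, apply Theorem \ref{main2} whenever its hypothesis holds, use Proposition \ref{all3} for the all-$3$ tuples, and treat $(4,3,3,3)$ separately. Your residue computation isolating $(4,3,3,3)$ as the unique four-color failure with $p_1\geq 4$ is exactly what the paper does by explicitly checking $(4,4,4,4)$, $(4,4,4,3)$, $(4,4,3,3)$. You are in fact more careful at the one place the paper is terse: for $(4,3,3,3)$ the paper says only ``easily seen by direct inspection,'' while you give a genuine derivation via Theorem \ref{mainthm} (the shifts force $x_2=x_3=x_4=0$, $x_1\in\{0,1\}$, so $\PM(4,3,3,3)=\max\{\RC(4,3,3,3),\,\RC(1,3,3,3)+1\}$, which equals $5$ by Theorem \ref{1-core}, Observation \ref{p_i=2} and Proposition \ref{all3}, with the matching lower bound from Corollary \ref{corlower}).

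More importantly, your final observation is a real catch: $(3,3,3,2)$ is a genuine third exception to (iii), and the statement as printed is wrong for this single tuple. Since $\ceiling{\frac{2}{3}}-1=0$, deleting a $2$ preserves the claimed formula only when the reduced tuple is itself non-exceptional, and $(3,3,3,2)$ reduces to the exceptional $(3,3,3)$. Concretely, $\PM(3,3,3,2)=\PM(3,3,3)=4$ by Observation \ref{p_i=2} and Proposition \ref{all3} (the coloring of $K_3$ with three distinct colors has no monochromatic \lf{} of order $3$ and no edge of a fourth color), whereas the formula in (iii) gives $3+1+1+1-3=3$. The paper's proof misses this because its opening reduction ``we may assume $p_1\geq p_2\geq p_3\geq p_4\geq 3$'' is invoked without checking that the reduction lands in a non-exceptional case; part (ii) is safe, as you note, because every tuple there containing a $2$ reduces to the exception-free two-color statement (i). So (iii) should be corrected to: the formula holds unless $(p_1,p_2,p_3,p_4)\in\{(3,3,3,3),(3,3,3,2),(4,3,3,3)\}$, with $\PM(3,3,3,3)=\PM(3,3,3,2)=4$ and $\PM(4,3,3,3)=5$. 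Your bookkeeping is correct and the paper's is not.
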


\begin{proof} Note that by Observation \ref{p_i=2}, we may assume that $p_1\geq p_2\geq p_3\geq p_4\geq 3$.

(i), (ii)  If $p_1\geq 4$, we may apply Theorem \ref{main2}.  Otherwise $p_1\leq 3$ in which case we are done by Observation \ref{all3}.

(iii)  If $p_1\geq 5=2\cdot 4-3$, then we may apply Theorem \ref{main2}.  So suppose $p_1\leq 4$.  If $(p_1, p_2, p_3, p_4)\in \{(4,4,4,4), (4,4,4,3), (4,4,3,3)\}$, then we may still apply Theorem \ref{main2}.  If $p_1=p_2=p_3=p_4=3$, then we may apply Observation \ref{all3}. So we only have to check the case where $(p_1, p_2, p_3, p_4)=(4,3,3,3)$ which is easily seen by direct inspection.
\end{proof}

Scobee \cite{SCO} determined the Ramsey number $R(m_1P_3,m_2P_3,m_3P_3)=m_2+m_3+3m_1-2$ for $m_1\ge m_2\ge m_3$ and $m_1\ge 2$ with a difficult (20 page) proof.  This implies Corollary \ref{r234}(ii) when all $p_i$'s are divisible by $3$.

We now prove a result about $\PM_r(p)$ when $p=4,5$ (we already determined $\PM_r(3)$ in Observation \ref{all3}).  To do so, we first establish the following.

\begin{proposition}\label{1coresmall}For all integers $r\geq 2$, $\RC_r(4)\leq r+3$, $\RC_r(5)\leq r+4$, and $\RC_r(6)\leq r+5$.
\end{proposition}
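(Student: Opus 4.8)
The plan is to prove the three bounds $\RC_r(4)\leq r+3$, $\RC_r(5)\leq r+4$, and $\RC_r(6)\leq r+5$ by appealing to \textbf{Proposition \ref{1-corehalf}} with a suitable choice of the parameter $t$. Recall that Proposition \ref{1-corehalf} states that $\RC(p_1,\dots,p_r)\leq n$ whenever there is an integer $1\leq t\leq r-1$ with $p_1\leq \ceiling{\frac{n+t-1}{t}}$ and $\sum_{i=1}^r(p_i-1)<(t+1)n$. In each of the three cases all $p_i$ are equal (to $4$, $5$, or $6$), so the second condition becomes $r(p-1)<(t+1)n$, and the goal is to verify both conditions at the claimed value of $n$.

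First I would handle $\RC_r(4)\leq r+3$. Set $n=r+3$ and try $t=2$; I need $p_1=4\leq \ceiling{\frac{n+1}{2}}=\ceiling{\frac{r+4}{2}}$, which holds once $r\geq 4$, and I need $3r<3n=3(r+3)=3r+9$, which always holds. This disposes of $r\geq 4$ via Proposition \ref{1-corehalf}; the small remaining cases $r=2,3$ I would check by hand (e.g.\ using Theorem \ref{1-core} or \eqref{rc2}), since $\RC(4,4)=4\leq 5$ by \eqref{rc2} and the $r=3$ case is a direct comparison with the maximum in Theorem \ref{1-core}. For $\RC_r(5)\leq r+4$, I would set $n=r+4$ and again aim for a $t$ making both inequalities hold: with $t=2$ the degree-budget inequality is $4r<3(r+4)$, i.e.\ $r<12$, so $t=2$ covers $r\leq 11$ while the clique-size condition $5\leq\ceiling{\frac{r+5}{2}}$ holds for $r\geq 5$; for larger $r$ I would instead take a larger $t$ (for instance a value near $\frac{4r}{r+4}$ scaled appropriately) so that $4r<(t+1)(r+4)$ while keeping $5\leq\ceiling{\frac{r+3+t}{t}}$, which is easy once $t$ is small relative to $r$. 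The case $\RC_r(6)\leq r+5$ runs the same way with $n=r+5$ and the budget $5r<(t+1)(r+5)$.

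The efficient way to organize all three at once is to observe that for $p\in\{4,5,6\}$ and $n=r+p-1$, the inequality $r(p-1)<(t+1)n=(t+1)(r+p-1)$ rearranges to a linear condition on $t$, and separately $p\leq\ceiling{\frac{n+t-1}{t}}=\ceiling{\frac{r+p-2+t}{t}}$ is guaranteed as soon as $t\leq \frac{r}{p-1}$ roughly (since then $\frac{r+p-2+t}{t}\geq \frac{r}{t}\geq p-1$, giving the ceiling at least $p$). So the real content is exhibiting, for each $r$, a single $t\in[1,r-1]$ lying in the intersection of the two ranges; I would argue that taking $t=\floor{r/(p-1)}$ (or a nearby value) always works for $r$ past a small threshold, and verify the finitely many small $r$ separately. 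I expect the main obstacle to be precisely these boundary cases where $r$ is small: there Proposition \ref{1-corehalf} may not apply because no admissible $t$ exists, and I would fall back on Theorem \ref{1-core} (whose maximum of three terms can be checked directly to be at most $r+p-1$ for the relevant small $r$) or on an ad hoc edge-counting argument via Observation \ref{edgecount}. The clean statement of the bounds ($r+3$, $r+4$, $r+5$) strongly suggests $t=2$ is the intended workhorse for the generic range, with the small-$r$ cases absorbing the only real casework.
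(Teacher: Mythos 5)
Your route (Proposition \ref{1-corehalf} with a suitable $t$) is genuinely different from the paper's, which works in the covering-design language ($\RC_r(p)\leq v$ iff $C(v,p-1)>r$) and uses the Erd\H{o}s--Hanani edge-count bound for $p=4,5$ and the Sch\"onheim bound for $p=6$. Your argument does go through for $p=4$ (take $t=2$; it in fact works for all $r\geq 3$, with $r=2$ from \eqref{rc2}) and for $p=5$ ($t=2$ for $4\leq r\leq 11$, $t=3$ for $r\geq 7$, and $r=2,3$ by hand). But for $p=6$ there is a genuine gap at $r=8$, and none of your proposed fallbacks can absorb it. With $n=r+5$ and all $p_i=6$, Proposition \ref{1-corehalf} requires an integer $t$ with $\frac{4r-5}{r+5}<t<\frac{r+4}{4}$; at $r=8$ this interval is $\left(\frac{27}{13},3\right)$ and contains no integer ($t=2$ fails the budget condition since $5r=40\not<3(r+5)=39$, while every $t\geq 3$ fails the clique-size condition since $\ceiling{\frac{8+t+4}{t}}\leq 5$). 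Your fallbacks fail at exactly this point as well: Theorem \ref{1-core} gives only $\RC_8(6)\leq\max\left\{6,\,8,\,\ceiling{2-\frac{8}{3}+16}\right\}=16>13$, and Observation \ref{edgecount} would require $8\binom{5}{2}=80<\binom{13}{2}=78$, which is false. So the claim $\RC_r(6)\leq r+5$ is unproven at $r=8$, and this is not merely a "small boundary case" of the kind you promise to check: no counting tool stated in the paper settles it.

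This is no accident of your method: $r=8$ is precisely where the paper's own elementary bounds also break down (the Sch\"onheim bound gives $C(13,5)\geq 8$, not $>8$), and the paper closes the case by importing the known exact value $C(13,5)=10>8$ from Mills's covering tables (it does the same with $C(9,5)=5>4$ at $r=4$, though there Theorem \ref{1-core} happens to suffice, giving $\RC_4(6)\leq 9$, so your outline survives at $r=4$). To complete your proof you must either cite this nontrivial covering-design fact or supply a genuine ad hoc argument that $K_{13}$ cannot be covered by eight cliques of order at most $5$. One such argument: each vertex has degree $12$ and each clique through it covers at most $4$ incident edges, so every vertex lies in at least $3$ cliques; since the sum of clique orders is at most $40$ and $3\cdot 13=39$, at least $12$ vertices lie in exactly $3$ cliques, and for such a vertex those $3$ cliques have order exactly $5$ and pairwise intersect only in that vertex. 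Distinct such vertices then yield distinct pairs of cliques, forcing at least $12\cdot 3=36$ pairs, contradicting $\binom{8}{2}=28$. Without an addition of this kind, your proposal does not prove the third inequality of the proposition.
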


Recall that in the language of covering designs, we have $\RC_r(p)\leq v$ if and only if $C(v, p-1)>r$.

\begin{proof}
We have $C(r+3, 3)\geq \frac{(r+3)(r+2)}{3\cdot 2}>r$ for all $r$, and thus $\RC_r(4)\leq r+3$.  We have $C(r+4, 4)\geq \frac{(r+4)(r+3)}{4\cdot 3}>r$ for all $r$, and thus $\RC_r(5)\leq r+4$. Finally, we have $C(r+5, 5)\geq \ceiling{\frac{r+4}{4}\ceiling{\frac{r+3}{3}}}>r$ for all $r$ such that $r\not\in \{4,8\}$. However, in the case when $r=4$ and $r=8$, it is known (see \cite{M} for instance) that $C(9,5)=5>4$ and $C(13,5)=10>8$.  Thus $\RC_r(6)\leq r+5$.
\end{proof}

%{\color{red}
%\begin{claim}\label{1coreequal}For all $r\geq 2$ and $p\geq 4$,
%$\RC_r(p)\leq p+(r-1)(\ceiling{\frac{p}{3}}-1)$.
%\end{claim}
%
%\begin{proof}
%When $p$ is not divisible by 3, the Erd\H{o}s-Hanani inequality holds.  When $p$ is divisible by 3, the Sch\"ondheim bound holds except when $r=4$ (or $r=8$ and $p=6$ which was dealt with above).  In that case we can check the survey on covering designs to see that $C(p+(r-1)(\ceiling{\frac{p}{3}}-1), p-1)>4$.
%\end{proof}
%}

\begin{proposition}\label{PM45}
For all $r\geq 2$, $\PM_r(4)=r+3$ and $\PM_r(5)=r+4$.
\end{proposition}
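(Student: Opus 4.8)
The plan is to read off both values directly from the deficiency formula of Theorem \ref{mainthm}, which rewrites $\PM_r(p)=f^3(p,\dots,p)$ and thereby reduces the computation to bounding a few short, nonuniform $1$-core Ramsey numbers. The crucial simplification is that for $p\in\{4,5\}$ the constraint $0\le x_i<\frac{p}{3}$ forces each integer $x_i$ into $\{0,1\}$, so each coordinate $p-3x_i$ is either $p$ (when $x_i=0$) or $p-3$ (when $x_i=1$), i.e.\ $1$ or $2$. Since a coordinate equal to $1$ or $2$ may be deleted without changing $\RC$ by Observation \ref{p_i=2}, the tuple $(x_1,\dots,x_r)\in\{0,1\}^r$ contributes exactly $\RC_k(p)+(r-k)$ to the maximum defining $f^3$ in \eqref{f}, where $k$ is the number of indices with $x_i=0$.

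With this reduction in hand, I would bound the summand $\RC_k(p)+(r-k)$ uniformly in $k$. For $k\ge 2$, Proposition \ref{1coresmall} gives $\RC_k(4)\le k+3$ and $\RC_k(5)\le k+4$, so the summand is at most $r+3$, respectively $r+4$. The two boundary values must be checked by hand, since Proposition \ref{1coresmall} is stated only for at least two colors: for $k=1$ a single color makes all of $K_n$ one monochromatic $1$-core, so $\RC_1(p)=p$ and the summand equals $p+(r-1)$, i.e.\ exactly $r+3$ (resp.\ $r+4$); for $k=0$ every coordinate has been discarded down to $\RC(p-3,\dots,p-3)$, and since any monochromatic edge is already a $1$-core of order $\ge 2$ this equals $2$, giving the summand $2+r\le r+3$ (resp.\ $r+4$).

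Putting the cases together, every summand of $f^3(4,\dots,4)$ is at most $r+3$ and every summand of $f^3(5,\dots,5)$ is at most $r+4$, while the $k=1$ choice attains these bounds with equality. Hence $f^3(4,\dots,4)=r+3$ and $f^3(5,\dots,5)=r+4$, and Theorem \ref{mainthm} yields $\PM_r(4)=r+3$ and $\PM_r(5)=r+4$. The matching lower bounds are in fact automatic from the $k=1$ term, but can equally be quoted from Corollary \ref{corlower}.(i), which with all $p_i$ equal gives $\PM_r(4)\ge 4+(r-1)=r+3$ and $\PM_r(5)\ge 5+(r-1)=r+4$.

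The only delicate part is the bookkeeping at $k\in\{0,1\}$: one must verify that the deletion of coordinates equal to $1$ or $2$ via Observation \ref{p_i=2} remains legitimate even when \emph{all} coordinates are deleted (the $k=0$ case), and evaluate $\RC$ correctly on these degenerate inputs. Everything else is a routine verification, and I expect no substantive obstacle beyond this boundary accounting.
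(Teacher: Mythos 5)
Your proof is correct and follows essentially the same route as the paper: apply Theorem \ref{mainthm}, note that $x_i\in\{0,1\}$ so Observation \ref{p_i=2} reduces each term to $\RC_k(p)+(r-k)$, and bound via Proposition \ref{1coresmall}. In fact your treatment is slightly more careful than the paper's two-line computation, which writes $\max\{\RC_s(p)+r-s : s\in[r]\}$ and thereby glosses over exactly the boundary cases $k=0$ (all coordinates degenerate) and $k=1$ (where $\RC_1(p)=p$ supplies the attainment of the maximum) that you verify explicitly.
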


\begin{proof}
By Proposition \ref{1coresmall} and Theorem \ref{mainthm}, we have
\[
\PM_r(4)=\max\{\RC_s(4)+r-s: s\in [r]\}=\max\{s+3+r-s: s\in [r]\}=r+3$$ and $$\PM_r(5)=\max\{\RC_s(5)+r-s: s\in [r]\}=\max\{s+4+r-s: s\in [r]\}=r+4.\qedhere
\]
\end{proof}
%We also note that to prove Conjecture \ref{conj2} it would suffice to prove the following conjecture which, on the surface, seems weaker (since the values of the $p_i$-s are more restricted).
%
%\begin{namedthm*}{Conjecture \ref{conj2}*}
%Let $r\geq \ell\geq 2$ and let $p_1\ge p_2\ge \dots\ge p_r\ge 4$ be integers where $p_1=\dots=p_\ell$, and $p_{\ell+1}, \dots, p_r$ are divisible by 3.  If
%\begin{equation*}
%n\ge  p_1-(r-1) + \sum_{i=2}^{r}\left\lceil \frac{p_i}{3}\right\rceil,
%\end{equation*}
%then every $r$-coloring of $K_n$ contains a \lf{} of color $i$ and order at least $p_i$ for some $i\in [r]$.
%\end{namedthm*}
%
%This implies Conjecture \ref{conj2}, since if we are given
%$p'_1\geq p'_2\geq \dots\geq p'_r\geq 4$, then we can round each $p'_i$ up to the
%nearest multiple of 3 or $p'_i$, whichever is smaller, without changing
%the value of the formula; i.e.\ we get a sequence $p_1\geq \dots\geq p_r\geq 4$ and an index $\ell$ such that
%$p_1=\dots=p_\ell$, and $p_{\ell+1},\dots, p_r$ are all divisible by 3 where $p_i\geq p_i'$ for all $i\in [r]$ and
%$p'_1-(r-1)+\sum_{i=2}^r \ceiling{p'_i/3} = p_1-(r-1)+\sum_{i=2}^r
%\ceiling{p_i/3}$.

\section{Conclusion}\label{conclusion}

We have determined $\PM(p_1, \dots, p_r)$ exactly unless $p_1\leq 2r-4-\sum_{i=2}^r 3\left(\ceiling{\frac{p_i}{3}}-\frac{p_i}{3} \right)$ and we have determined $\PM(p_1, \dots, p_r)$ to within a constant (depending on $r$) in every case.  It would certainly be interesting to solve these remaining cases exactly, although Corollary \ref{corlower} gives some evidence that this remaining case where $p_1$ is fixed and $r$ is large could be challenging.

In particular, we think the following problem is worth focusing on.

\begin{problem}
For all integers $r\geq 5$ and $p\geq 6$ such that $p\equiv 0\bmod 3$ or $p\equiv 2\bmod 3$, determine $\PM_r(p)$ exactly.
\end{problem}

Note that the case when $r\leq 4$ follows from Observation \ref{r234}.  The case when $p=2$ is trivial, the case $p=3$ is Observation \ref{all3}, the cases $p=4,5$ follow from Proposition \ref{PM45}, and the case $p\equiv 1\bmod 3$ follows from Theorem \ref{main2}.  Also note that by Theorem \ref{main2} we may assume $r\geq \frac{p+4}{2}$ when $p\equiv 0\bmod 3$ and we may assume $r\geq p+2$ when $p\equiv 2\bmod 3$.

It already seems non-trivial to determine $\PM_r(6)$ for all $r\geq 5$, because in order to do so, one needs to compute (or appropriately bound) $\RC(p_1, \dots, p_a, p_{a+1}, \dots, p_r)$ where $p_1=\dots=p_a=6$ and $p_{a+1}=\dots=p_r=3$ for all $0\leq a\leq r$.

Another question relates to our estimates on $\RC(p_1, \dots, p_r)$.  We think of $p_1-(r-1)+\sum_{i=2}^r\ceiling{\frac{p_i}{3}}$ as being the ``standard'' value of $\PM(p_1, \dots, p_r)$, but of course we know there are examples where $\PM(p_1, \dots, p_r)>p_1-(r-1)+\sum_{i=2}^r\ceiling{\frac{p_i}{3}}$. However, all such examples stem from the fact that $\RC_s(3)=\floor{\frac{\sqrt{8s+1}+1}{2}}+1$ (see Corollary \ref{corlower}).  In every other case we know of, we have $\RC(p_1, \dots, p_r)\leq p_1-(r-1)+\sum_{i=2}^r\ceiling{\frac{p_i}{3}}$ (and in many cases it is much smaller).  So this raises the following question.

\begin{question}
Under what circumstances do we have $\RC(p_1, \dots, p_r)\leq p_1-(r-1)+\sum_{i=2}^r\ceiling{\frac{p_i}{3}}$?  Is this always true when none of $p_1\geq  \dots\geq p_r$ are divisible by 3?   Is this always true when $p_1\geq  \dots\geq  p_r\geq 4$?
\end{question}
One case in which we are confident the above is true is when $p=p_1=\dots=p_r\geq 4$ (and we proved it for $4\leq p\leq 6$ in Proposition \ref{1coresmall}).

Another reason it seems to be difficult to determine $\PM(p_1, \dots, p_r)$ exactly in all cases is that the lower bound examples are not necessarily unique.  We have multiple examples where $\PM(p_1, \dots, p_r)=\RC(p_1, \dots, p_r)$ and in those cases the lower bound for $\PM(p_1, \dots, p_r)$ comes from the extremal coloring $[p_1-1, \ceiling{\frac{p_2}{3}}-1, \dots, \ceiling{\frac{p_r}{3}}-1]$ whereas the lower bound for $\RC(p_1, \dots, p_r)$ comes from a covering design.  For instance, we have $\PM(5,5,5)=7=\RC(5,5,5)$. The extremal coloring $[4,1,1]$ provides a lower bound for $\PM(5,5,5)$, whereas a lower bound for $\RC(5,5,5)$ comes from partitioning the vertices of $K_{6}$ into $A_1,A_2,A_3$ with $|A_i|=2$ and coloring edges in $A_i$ and in $[A_i,A_{i+1}]$ with color $i$.

Another example is $\PM(4,3,3,3)=5=\RC(4,3,3,3)$. Coloring the edges of a $K_4$ by coloring a triangle with colors 2,3,4 and coloring the other edges with color 1 provides a lower bound for $\PM(4,3,3,3)$; whereas coloring a triangle with color 1 and coloring the other edges with three different colors provides the lower bound for $\RC(4,3,3,3)>4$.

%Yet another example is $\PM(7,6,6,6)=10=\RC(7,6,6,6)$, where the example which shows $\RC(7,6,6,6)>9$ is by partitioning the vertices of $K_{10}$ into $A_1, A_2, A_3, A_4$ with $|A_2|=|A_3|=|A_4|=2$ and $|A_1|=3$ and coloring all edges inside $A_1\cup A_2\cup A_3$ with color 1 and all edges inside $A_i\cup A_1$ with color $i$ for $i\in \{2,3,4\}$.

%\LD{There are many examples where the covering bound meets the path matching bound and the fact that the covering bound doesn't exceed the path matching bound seems like a coincidence.}

On the other hand we have many examples where $\PM(p_1, \dots, p_r)>\RC(p_1, \dots, p_r)$, such as $\PM(4,4,4)=6>5=\RC(4,4,4)$. So this raises the following question.

\begin{question}
Under what circumstances do we have $\PM(p_1, \dots, p_r)=\RC(p_1, \dots, p_r)$?
\end{question}

\medskip
\noindent{{\bf Acknowledgement.} Thanks to Andr\'e E. Kezdy for his help to obtain the thesis \cite{SCO} and to J\'acint Szab\'o for helping with the reference \cite{LV}.  We also thank the referee for some helpful remarks.


\begin{thebibliography}{99}


\bibitem{B} A. Binkle, Structural results on maximal $k$-degenerate graphs, {\em Discussiones Mathematicae Graph Theory}, {\bf 32} (2012), pp. 659-676.

\bibitem{Bo} B. Bollob\'as. Random graphs, {\em Cambridge university press}, no 73 (2001).

\bibitem{BR} S.A. Burr and J. A. Roberts. On Ramsey numbers for linear forests. {\em Discrete Mathematics}, {\bf 8}, no. 3 (1974), pp. 245-250.

\bibitem{CL} E. J. Cockayne, P. J. Lorimer, The Ramsey number for
stripes, {\em J. Austral. Math. Soc.}, {\bf 19} (1975), pp.
252-256.

\bibitem{EH} P. Erd\H os, H. Hanani, On a limit theorem in combinatorial analysis,  {\em Publ. Math. Debrecen}, {\bf 10} (1963), pp. 10-13.

\bibitem{FS} R.J. Faudree  and  R.H. Schelp, Ramsey numbers  for  all  linear  forests, {\em Discrete  Mathematics}, {\bf 16} (1976), pp. 149-165.


%\bibitem{FU} Z. F\"uredi, Covering pairs by $q^2+q+1$ sets, {\em J. of Combinatorial Theory A}, {\bf 54} (1990), pp. 248-271.

\bibitem{FUSUR} Z. F\"uredi, Matchings and covers in hypergraphs, {\em Graphs and Combinatorics}, {\bf 4} (1988), pp. 115-206.

\bibitem{GGY} L. Gerencs\'er   and   A. Gy\'arf\'as,   On   Ramsey-Type   Problems, {\em Annales   Universitatis   Scientiarum Budapestiensis, E\"otv\"os, Sect. Math.}, {\bf 10} (1967), pp. 167-170.


\bibitem{GYSUR} A. Gy\'arf\'as, Large monochromatic components in edge colorings of graphs: A survey, {\em in: Ramsey Theory, Birkh\"auser, 2011, A. Soifer editor, Progress in Mathematics, vol. 285}, pp. 77-96.

\bibitem{HS} P. Hor\'ak and N. Sauer, Covering complete graphs by cliques, {\em Ars Combinatoria} {\bf 33} (1992), pp. 279-288.

\bibitem{LV} M. Las Vergnas, An extension of Tutte's $1$-factor theorem, {\em Discrete Mathematics}, {\bf 23} (1978), pp. 241-255.

\bibitem{LP} L. Lov\'asz, M. Plummer, Matching Theory, Budapest, Akad\'emiai Kiad\'o (1986).

\bibitem{M} W. H. Mills, Covering designs I: Covering by a small number of subsets, {\em Ars Combinatoria}, {\bf 8} (1979), pp. 199-315.

\bibitem{P} F. Petrov, A direct proof that every $r$-colored complete graph on $n=(r+1)m-(r-1)$ vertices has a monochromatic matching of size $m$, {\em https://mathoverflow.net/questions/299423/}

\bibitem{Sch} J. Sch\"onheim, On coverings, {\em Pacific Journal of Mathematics}, {\bf 14} (1964) pp. 1405-1411.

\bibitem{SCO} M. W. Scobee, On the Ramsey number $r(m_1P_3,m_2P_3,m_3P_3)$ and related results, {\em MA thesis, University of Louisville} (1993).

\bibitem{S} S. B. Seidman, Network structure and minimum degree, {\em Social networks}, {\bf 5} (1983), pp. 269-287.

\bibitem{W} R. M. Wilson, An existence theory for pairwise balanced designs I-III, {\em Journal of Combinatorial Theory A}, {\bf 13} (1972) pp. 220-273, {\bf 18} (1975) pp. 71-79.

\bibitem{XYZ} C. Xu, H. Yang, S. Zhang, A new proof on the Ramsey number of matchings, {\em arXiv:1905.08456v1}

\end{thebibliography}
\end{document}